\newtheorem{remark}{Remark}[section]
\renewcommand{\ldots}{\dotsc}
\newtheorem{model-problem}{Problem}
\newcommand{\bn}{\textbf{n}}
\newcommand{\br}{\textbf{r}}
\newcommand{\be}{\textbf{e}}
\newcommand{\bv}{\textbf{v}}
\def\T{{\mathcal T}}
\def\F{{\mathcal F}}
\def\E{{\mathcal E}}
\def\W{{\mathcal W}}
\def\bn{{\bf n}}
\def\3bar{{|\hspace{-.02in}|\hspace{-.02in}|}}
\def\bbq{\begin{equation*}}
\def\eeq{\end{equation*}}
\def\br{\begin{eqnarray}}
\def\er{\end{eqnarray}}
\def\brr{\begin{eqnarray*}}
\def\err{\end{eqnarray*}}
\def\O{\Omega}
\def\E{{\mathcal E}}
\def\pa{\partial}
\def\bn{{\bf n}}
\newtheorem{WG}{Weak Galerkin Algorithm}
\begin{document}

\setlength{\parindent}{0.25in} \setlength{\parskip}{0.08in}

\title{High Order Morley Elements for Biharmonic Equations on Polytopal Partitions}

\author{
Dan Li\thanks{Jiangsu Key Laboratory for NSLSCS, School of Mathematical Sciences,  Nanjing Normal University, Nanjing 210023, China (danlimath@163.com). {The research of Dan Li was supported by Jiangsu Funding Program for Excellent
Postdoctoral Talent and} China National Natural Science Foundation Grant (No. 12071227).}
\and
Chunmei Wang \thanks{Department of Mathematics, University of Florida, Gainesville, FL 32611 (chunmei.wang@ufl.edu). The research of Chunmei Wang was partially supported by National Science Foundation Grants DMS-2136380 and DMS-2206332.}
\and
Junping Wang\thanks{Division of Mathematical Sciences, National Science Foundation, Alexandria, VA 22314 (jwang@nsf.gov). The research of Junping Wang was supported by the NSF IR/D program, while working at National Science Foundation. However, any opinion, finding, and conclusions or recommendations expressed in this material are those of the author and do not necessarily reflect the views of the National Science Foundation.}
\and
Shangyou Zhang\thanks{Department of Mathematical Sciences, University of Delaware, Newark, DE 19716 (szhang@udel.edu).}}

\maketitle

\begin{abstract}This paper introduces an extension of the Morley element for approximating solutions to biharmonic equations. Traditionally limited to piecewise quadratic polynomials on triangular elements, the extension leverages weak Galerkin finite element methods to accommodate higher degrees of polynomials and the flexibility of general polytopal elements. By utilizing the Schur complement of the weak Galerkin method, the extension allows for fewest local degrees of freedom while maintaining sufficient accuracy and stability for the numerical solutions. The numerical scheme incorporates locally constructed weak tangential derivatives and weak second order partial derivatives, resulting in an accurate approximation of the biharmonic equation. Optimal order error estimates in both a discrete $H^2$ norm and the usual $L^2$ norm are established to assess the accuracy of the numerical approximation. Additionally, numerical results are presented to validate the developed theory and demonstrate the effectiveness of the proposed extension.
\end{abstract}

\begin{keywords}
weak Galerkin, finite element method, Morley element, biharmonic equation, weak tangential derivative, weak second order partial derivative, polytopal partitions.
\end{keywords}

\begin{AMS}
Primary 65N30, 65N12, 65N15; Secondary 35B45, 35J50.
\end{AMS}

\section{Introduction}
This paper is concerned with the new development of high order Morley elements for the biharmonic equation by using the weak Galerkin (WG) method. For simplicity, we consider the biharmonic equation that seeks an unknown function $u$ satisfying
\begin{equation}\label{model-problem}
\begin{split}
\Delta^2u&=g, \quad\mbox{in}~\O, \\
        u&=\zeta,\quad \mbox{on}~\pa\O,\\
\frac{\pa u}{\pa\textbf{n}}&=\xi,\quad\mbox{on}~\pa\O,
\end{split}
\end{equation}
where $\Omega\subset\mathbb R^d(d=2,3)$ is a bounded polytopal domain with Lipschitz continuous boundary $\pa\O$, and $\textbf{n}$ is the unit outward normal vector to $\pa\O.$

A weak formulation of \eqref{model-problem} seeks $u\in H^2(\O)$ satisfying $u|_{\pa\O}=\zeta$ and $\frac{\pa u}{\pa\textbf{n}}|_{\pa\O}=\xi$ such that
\begin{equation}\label{weakform}
\sum_{i,j=1}^d(\pa_{ij}^2u,\pa_{ij}^2v)=(g,v),~~~~\forall v\in H_0^2(\O),
\end{equation}
where $H_0^2(\O)=\{v\in H^2(\O):v|_{\pa\O}=0, \nabla v |_{\pa\O}=\textbf{0}\}$.

The $H^2$-conforming finite element method for the biharmonic equation is well-known but requires a $C^1$-continuity of piecewise polynomials on simplicial elements, which poses practical difficulties. To address this issue, various nonconforming finite element methods were introduced. Among these methods, the Morley element has the fewest degrees of freedom on each triangular element, making it not only a popular research topic but also a practically useful method. Previous works such as \cite{rusa1988, WX2006, WX2012} extended the Morley element to higher dimensions. Other works, including \cite{PS2013, WX2007-2, WXH2006, HI2023, MC2006}, proposed generalizations of the Morley element for different types of meshes. Parallel algorithms and multigrid methods for the Morley element were developed in \cite{CHL2001, HH2011, RS2002, SX1998}. Since then, rapid progress has been made in various numerical methods for the biharmonic equation on polytopal meshes, such as discontinuous Galerkin finite element methods \cite{CDG2009, MSB2007, YZ2022}, virtual element methods \cite{AMV2018, CH2020}, and weak Galerkin  methods \cite{YZ2020, WW_bihar-2014, WW_HWG-2015, WYWM2013, BGZ2020, DR2022, WWL-2022, WWL-2022-bihar, ZXW2023}.  The WG finite element method  was first proposed  for second-order elliptic problems in \cite{wy}. The WG method is a natural generalization of classical finite element methods as it relaxes the continuity requirement for the approximating functions. This weak continuity of the numerical approximation allows for high flexibility in constructing weak finite elements with any desired order of convergence. To the best of our knowledge, no high-order extension has been developed that combines the advantages of the Morley element, including its minimal degrees of freedom, with the ability to handle general polytopal partitions.

The objective of this paper is to present a high-order generalization of the Morley element using the weak Galerkin method. Inspired by the de Rham complexes for weak Galerkin spaces \cite{WWYZ-JCAM2021}, we propose innovations to the original weak finite element procedures. These innovations involve the introduction of additional approximating functions defined on the $(d-2)$-dimensional sub-polytopes and $(d-1)$-dimensional sub-polytopes of $d$-dimensional polytopal elements, resulting in a reduction of the degrees of freedom. To enhance the numerical scheme, we incorporate a locally designed weak tangential derivative operator and a weak second-order partial derivative operator. Furthermore, we establish optimal order error estimates for the resulting numerical approximations in both the energy norm and the $L^2$ norm.

The main contributions of this paper can be summarized as follows. Firstly, unlike the original Morley element, the proposed WG extension allows for higher-order polynomial approximation with the local minimum number of degrees of freedom, while also being applicable to general polytopal elements. This extension broadens the scope of problems that can be effectively addressed. Secondly, in comparison to existing results on WG methods, we introduce a novel technique within the WG framework that significantly reduces the number of unknowns. This advancement enhances the efficiency and computational feasibility of the method. Finally, the versatility of the new WG method enables its application to various modeling problems, including those that involve the Hessian operator in their weak formulation.

The paper is structured as follows. In Section \ref{Section:WeakHessian}, we provide a review of the definitions of the discrete weak tangential derivative and the discrete weak second-order partial derivatives. Section \ref{Section:numerical scheme} presents the weak Galerkin scheme and introduces its Schur complement. Section \ref{Section:seu} establishes the solution existence and uniqueness for this new scheme. Section \ref{Section:error-equation} is devoted to the derivation of an error equation for the weak Galerkin scheme, providing insights into the accuracy of the method.
In Section \ref{technique-estimate}, we present some technical results that are utilized in the subsequent section. Section \ref{Section:EE} is dedicated to establishing error estimates for the numerical approximation, considering both the energy norm and the $L^2$ norm. Finally, in Section \ref{Section:NE}, we present numerical results that demonstrate the effectiveness of the developed theory.

This paper will follow the standard notations for the Sobolev space. For an open bounded domain $D \subset\mathbb R^d$ with Lipschitz continuous boundary $\pa D$, we denote by $\|\cdot\|_{s,D}$ and $|\cdot|_{s,D}$ the norm and semi-norm in the Sobolev space $H^s(D)$ for any $s\geq0$. When $s=0$, we use $(\cdot,\cdot)$ and $|\cdot|_{D}$ to denote the usual integral inner product and semi-norm, respectively. The subscript will be omitted when $D=\O$. Moreover, we use ``$A\lesssim B$'' to denote the inequality ``$A\leq CB$'' where $C$ stands for a generic constant independent of the meshsize or the functions appearing in the inequality.

\section{Discrete weak derivatives}\label{Section:WeakHessian}

Let ${\cal T}_h$ be a polytopal partition of $\Omega$ satisfying the shape regular assumptions described in \cite{WY-ellip_MC2014}. For $T\in{\cal T}_h$, denote by $\pa T$ the boundary of $T$ consisting of $(d-1)$-dimensional polytopal elements (called ``face" for simplicity). For each face $\F\subset\pa T$, denote by $\pa\F$ the boundary of $\F$ consisting of $(d-2)$-dimensional polytopal elements (called ``edge" for simplicity). Denote by ${\F}_h$ the set of all faces for all elements in ${\cal T}_h$ and ${\F}_h^0={\F}_h\setminus\pa\O$ the set of all interior faces. Analogously, denote by $\mathcal{E}_h$ the set of all edges for all elements in ${\cal T}_h$ and $\mathcal{E}_h^0=\mathcal{E}_h\setminus\pa\O$ the set of all interior edges. Moreover, denote by $h_T$ the meshsize of $T$ and $h=\max_{T\in{\cal T}_h}h_T$ the meshsize of ${\cal T}_h$. For any given integer $r\geq0$, denote by $P_r(T)$ and $P_r(\pa T)$ the space of polynomials on $T$ and $\pa T$ with degrees no more than $r$, respectively.

For each element $T\in{\cal T}_h$, we introduce a weak function $v=\{v_0,v_{b,e},v_{b,f},{v_n}\bn_f\}$, where $v_0$ represents the value of $v$ in the interior of $T$, $v_{b,e}$ and $v_{b,f}$ represent the values of $v$ on the edge $e$ and face $\F$ respectively, $\bn_f$ is the unit outward normal vector to $\F$, and ${v_n}$ represents the normal derivative of $v$ on $\pa T$ along the direction $\bn_f$.

 %In addition, $\pmb{v_{g}}$ can be decomposed as %follows\begin{eqnarray}\label{decomposition}
 %\begin{split}
 %\pmb{v_g}&=v_n\bn_f+\nabla_{w,\pmb{\tau},k-2,T}v,
 %\end{split}
 %\end{eqnarray}
 %where $v_n=\nabla v\cdot\bn_f$ with $\pmb{n}_{f}$ being an unit outward normal vector  %to $\F$, $\nabla_{w,\pmb{\tau},k-2,T}v$ is the discrete weak tangential derivative  %defined in  \eqref{discrete weak gradient0} and \eqref{discrete weak gradient-1}.
 For any given integer $k\geq 3$, denote by $V_{k}(T)$ the discrete space of local weak functions given by
\begin{equation*}
\begin{split}
V_{k}(T)
=\{\{v_0,v_{b,e},v_{b,f},v_n\bn_f\}: ~&v_0\in P_k(T), v_{b,e}\in P_{k-2}(e),v_{b,f}\in P_{k-3}(\F), \\ & ~ v_n\in P_{k-2}(\F),
 \F\subset\pa T, e\subset\pa\F\}.
\end{split}
\end{equation*} It should be pointed out that $v_{b,e}=const$ from problems in 2D.

On each face $\F$, we introduce a finite element space ${{\W}}_{k-2}(\F)$ as polynomial vectors of degree $k-2$ tangential to $\F$:
$$
{{\W}}_{k-2}(\F)=\{\pmb{\psi}:~ \pmb{\psi}\in [P_{k-2}(\F)]^d,\ \pmb{\psi}\cdot\bn_f=0\}.
$$
%where $\bn_f$ is the unit outward normal vector to $\F$.

\begin{definition}\label{def2.1}\cite{WWYZ-JCAM2021}(Discrete weak tangential derivative)
The discrete weak tangential derivative for any weak function $v\in V_{k}(T)$, denoted by $\nabla_{w,\pmb{\tau},k-2,T}v$, is defined as the unique polynomial in ${\W}_{k-2}(\F)$ satisfying
\begin{eqnarray}\label{discrete weak gradient-1}
\langle\nabla_{w,\pmb{\tau},k-2,T}v,\pmb{\psi}\times\pmb{n}_{f}\rangle_{\F}
=-\langle v_{b,f},(\nabla\times\pmb{\psi})\cdot\pmb{n}_{f}\rangle_{\F}+\langle v_{b,e},\pmb{\psi}\cdot{\pmb{\tau}\rangle_{\pa\F}}
\end{eqnarray}
 for all $\pmb{\psi}\in {\W}_{k-2}(\F)$. Here, $\pmb{\tau}$ represents the tangential unit vector on $\pa\F$  that is set such that $\pmb{\tau}$ and $\pmb{n}_{f}$ obey the right hand rule.
\end{definition}

With the normal derivative {$v_n$ and} the discrete weak tangential derivative $\nabla_{w,\pmb{\tau},k-2,T}v$, we can define the weak gradient of $v$  on the face $\F$ as follows:
\begin{eqnarray}\label{decomposition}
 %\begin{split}
 \pmb{v_g}&=v_n\bn_f+\nabla_{w,\pmb{\tau},k-2,T}v.
% \end{split}
\end{eqnarray}

\begin{definition}\cite{WW_bihar-2014} (Discrete weak second {order partial derivative}) \label{discrete weak partial derivetive-0}
For any $v\in V_{k}(T)$, the discrete weak second order partial derivative, denoted by $\pa_{ij,w,k-2,T}^{2}v$, is defined as a unique polynomial in $P_{k-2}(T)$ satisfying
\begin{eqnarray}\label{discrete weak partial derivetive}
(\pa_{ij,w,k-2,T}^{2}v,\varphi)_{T}=(v_{0},\pa_{ji}^{2}\varphi)_{T}-\langle v_{b,f}n_{i},\pa_{j}\varphi \rangle_{\pa T}+\langle v_{gi},\varphi {n_{j}\rangle_{\pa T}}
\end{eqnarray}
for any $\varphi\in P_{k-2}(T)$. Here, $\bn_f=(n_1,\ldots,n_d)$ represents the unit outward normal vector to $\pa T$, and $v_{gi}$ is the $i$-th component of the vector $\pmb{v_{g}}$ given in \eqref{decomposition}.
\end{definition}

By utilizing the integration by parts to the first term on the right-hand side of \eqref{discrete weak partial derivetive} we obtain
\begin{equation}\label{error equation-13}
\begin{split}
(\pa_{ij,w,k-2,T}^2v,\varphi)_T
=&(\pa_{ij}^2v_0,\varphi)_T+\langle(v_0-v_{b,f})n_i,\pa_j\varphi\rangle_{\pa T}-\langle\pa_iv_0-v_{gi},\varphi n_j\rangle_{\pa T}
\end{split}
\end{equation}
for any $\varphi\in P_{k-2}(T)$.

\section{Weak Galerkin schemes}\label{Section:numerical scheme}
%Recall that $v_n=\nabla v\cdot\bn_f$. For any $\F\in{\F}_h^0$ shared by two  %$$v_n|_{T_L\cap\F}+v_n|_{T_R\cap\F}=0,$$
%where $v_n|_{T_L\cap\F}$ and $v_n|_{T_R\cap\F}$ represent the values of $v_n$ on $\F$ as seen from the elements $T_L$ and $T_R$, respectively.

%Let $k\geq3$ be a given integer. Denote by $V_{k}(T)$ the local finite element space given by
%\begin{equation*}
%\begin{split}
%V_{k}(T)
%=\{\{v_0,v_{b,e},v_{b,f},v_n\},~&v_0\in P_k(T), v_{b,e}\in P_{k-2}(e),v_{b,f}\in P_{k-3}(\F),v_n\in P_{k-2}(\F),\\
% &e\subset\pa\F,~\F\subset\pa T\}.
%\end{split}
%\end{equation*}
%It should be pointed out that in two dimensions, the second component $v_b$ is a constant function on $e$.\\
We construct a global finite element space $V_h$ by patching $V_{k}(T)$ over all the elements $T\in{\cal T}_h$ through common values $v_{b,e}$ on $\mathcal{E}_h^0$, $v_{b,f}$ and $v_n\bn_f$ on ${\F}_h^0$; i.e.,
$$
V_h=\{v=\{v_0,v_{b,e},v_{b,f},v_n\bn_f\}:~v|_T\in V_k(T),~T\in{\cal T}_h\},
$$
Denote by $V_h^0$ the subspace of $V_h$ given by
$$
V_h^0=\{v:v\in V_h,~v_{b,e}|_e=0,~v_{b,f}|_{\F}=0,~v_n|_{\F}=0,~e\subset\pa \O, ~\F\subset\pa\O\}.
$$

For convenience, denote by $\nabla_{w,\pmb{\tau}}v$ the discrete weak tangential derivative $\nabla_{w,\pmb{\tau},k-2,T}v$   and $\pa^2_{ij,w}v$ the discrete weak second order partial derivative $\pa_{ij,w,k-2,T}^{2}v$; i.e.,
$$
(\nabla_{w,\pmb{\tau}}v)|_T=\nabla_{w,\pmb{\tau},k-2,T}(v|_T),\quad(\pa^2_{ij,w}v)|_T=\pa_{ij,w,k-2,T}^2(v|_T),\quad v\in V_h.
$$

Denote by $Q_b$, $Q_f$ and $Q_n$ the usual $L^2$ projection operators onto $P_{k-2}(e)$, $P_{k-3}(\F)$ and $P_{k-2}(\F)$, respectively.  In $V_h\times V_h$, we introduce the following bilinear forms:
\begin{equation*}\label{stabilizer}
\begin{split}
(\pa^2_{w}w,\pa^2_{w}v)&=\sum_{T\in{\cal T}_h}\sum_{i,j=1}^{d}(\pa_{ij,w}^{2}w,\pa_{ij,w}^{2}v)_T,\\
s(w,v)=&\sum_{T\in {\cal T}_h}h_T^{-2}\langle Q_b w_0-w_{b,e}, Q_b v_0-v_{b,e}\rangle_{\pa \F}\\
&+\sum_{T\in {\cal T}_h}h_T^{-3}\langle Q_f w_0-w_{b,f}, Q_{f} v_0-v_{b,f}\rangle_{\pa T}\\
&+\sum_{T\in {\cal T}_h}h_T^{-1}\langle Q_n(\nabla w_0)\cdot\pmb{n}_{f}-w_n, Q_n(\nabla v_0)\cdot\pmb{n}_{f}-v_n\rangle_{\pa T}\\
&+\delta_{k,3}\sum_{T\in {\cal T}_h}h_T^{-1}\langle Q_nD_{\pmb{\tau}}w_0-\nabla_{w,\pmb{\tau}}w,
Q_nD_{\pmb{\tau}}v_0-\nabla_{w,\pmb{\tau}}v\rangle_{\pa T},\\
a_s(w,v)&=(\pa^2_{w}w,\pa^2_{w}v)+s(w,v),
\end{split}
\end{equation*}
   where $Q_nD_{\pmb{\tau}}w_0=Q_n(\bn_{f}\times(\nabla w_0\times\bn_{f}))$ and $\delta_{k,3}$ is the usual Kronecker's delta with value $1$ when $k=3$ and value $0$ otherwise.

\begin{WG}
A numerical approximation for the model equation \eqref{model-problem} based on the weak formulation \eqref{weakform} can be obtained by seeking $u_{h}=\{u_{0},u_{b,e},u_{b,f},u_n\bn_f\}\in V_{h}$ satisfying $u_{b,e}=Q_b\zeta$ on $e\subset\pa\O$, $u_{b,f}=Q_f\zeta$ and $u_n= Q_n\xi$ on $\F\subset\pa\O$ and the following equation
\begin{equation}\label{WG-scheme}
a_s(u_{h},v)=(g,v_0),\qquad\forall v\in V_h^0.
\end{equation}
\end{WG}

\medskip

One may apply the Schur complement approach to the weak Galerkin scheme \eqref{WG-scheme}, yielding an equivalent formulation with reduced number of unknowns in the resulting linear system. More specifically, the Schur complement for \eqref{model-problem} seeks $u_h=\{D(u_{b,e},u_{b,f},u_n,g),u_{b,e},u_{b,f},u_n\bn_f\}\in V_h$ such that $u_{b,e}=Q_b\zeta$ on $e\subset\pa \O$, $u_{b,f}=Q_f\zeta$ and $u_n= Q_n\xi$ on $\F\subset\pa\O$ satisfying
\begin{equation}\label{WG-schemee}
a_s(\{D(u_{b,e},u_{b,f},u_n,g),u_{b,e},u_{b,f},u_n\bn_f\},v)=0
\end{equation}
for all $v=\{0,v_{b,e},v_{b,f},{v_n\bn_f\}\in V_h^0}$,
where $u_0=D(u_{b,e},u_{b,f},u_n,g)$ is obtained by solving the following equation
\begin{equation}\label{WG-schemeee}
a_s(\{u_0,u_{b,e},u_{b,f},u_n\bn_f\},v)=(g,v_0)
\end{equation}
for all $v=\{v_0,0,{0,\textbf{0}\}\in V_h^0.}$

\begin{remark}
The weak Galerkin scheme \eqref{WG-scheme} is equivalent to its Schur complement \eqref{WG-schemee}-\eqref{WG-schemeee}. The  proof is similar to that in \cite{Eff-MWY2017}. As an illustration, when $k=3$, the degrees of freedom on a pentagonal element and a hexahedral element are shown in Figure \ref{polygonal-element}.
 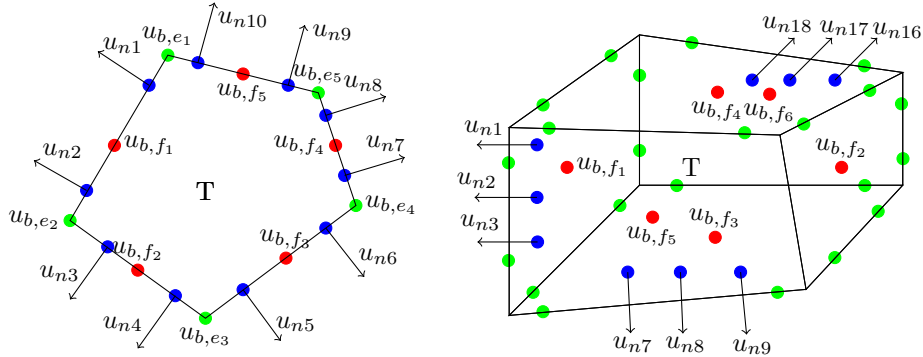
\begin{figure}[h]
\begin{center}
\begin{tikzpicture}
\coordinate (B1) at (-7.5,2.5); \filldraw[green] (B1)circle(0.08);
\coordinate (B2) at (-8.8, 0.3);  \filldraw[green] (B2)circle(0.08);
\coordinate (B3) at (-7, -1); \filldraw[green] (B3) circle(0.08);
\coordinate (B4) at (-5,0.5);\filldraw[green] (B4)circle(0.08);
\coordinate (B5) at (-5.5,2.0);\filldraw[green] (B5)circle(0.08);
\draw node[above] at (B1) {$u_{b,e_1}$}; \draw node[left] at (B2){$u_{b,e_2}$};
\draw node[below] at (B3){$u_{b,e_3}$}; \draw node[right] at (B4) {$u_{b,e_4}$}; \draw node[above] at (B5) {$u_{b,e_5}$};
\coordinate (M1) at (-8.21,1.3); \coordinate (M2) at (-7.9,-0.36); \coordinate (M3) at (-5.93,-.2);
\coordinate (M4) at (-5.27,1.3);  \coordinate (M5) at (-6.5,2.25);  \coordinate (MM) at (0.1,0.15);
\filldraw[red] (M1) circle(0.08);\filldraw[red] (M2) circle(0.08);\filldraw[red] (M3) circle(0.08);
\filldraw[red] (M4) circle(0.08);\filldraw[red] (M5) circle(0.08);
\draw node[right] at (M1) {$u_{b,f_1}$}; \draw node[above] at (M2){$u_{b,f_2}$};
\draw node[above] at (M3){$u_{b,f_3}$}; \draw node[left] at (M4) {$u_{b,f_4}$}; \draw node[below] at (M5) {$u_{b,f_5}$};\coordinate (BM9) at (-5.9,2.1); \filldraw[blue] (BM9) circle(0.08);\coordinate (M9end) at (-5.68,2.9);
\coordinate (ne9) at (-5.7,2.8); \draw node[right] at (ne9) {$u_{n9}$}; \draw[->] (BM9)--(M9end);
\coordinate (BM10) at (-7.1,2.4); \filldraw[blue] (BM10) circle(0.08);\coordinate (M10end) at (-6.88,3.2);
\coordinate (ne10) at (-6.5,3.2); \draw node[below] at (ne10) {$u_{n10}$}; \draw[->] (BM10)--(M10end);
\coordinate (BM1) at (-7.75,2.1); \filldraw[blue] (BM1) circle(0.08);\coordinate (M1end) at (-8.43,2.55);
\coordinate (ne1) at (-8.1,2.4); \draw node[above] at (ne1) {$u_{n1}$}; \draw[->] (BM1)--(M1end);
\coordinate (BM2) at (-8.58,0.7); \filldraw[blue] (BM2) circle(0.08);\coordinate (M2end) at (-9.28,1.1);
\coordinate (ne2) at (-8.9,1.0); \draw node[above] at (ne2) {$u_{n2}$}; \draw[->] (BM2)--(M2end);
\coordinate (BM3) at (-8.3,-0.05); \filldraw[blue] (BM3) circle(0.08);\coordinate (M3end) at (-8.8,-0.75);
\coordinate (ne3) at (-8.55,-0.4); \draw node[left] at (ne3) {$u_{n3}$}; \draw[->] (BM3)--(M3end);\coordinate (BM4) at (-7.4,-0.7); \filldraw[blue] (BM4) circle(0.08);\coordinate (M4end) at (-7.9,-1.4);
\coordinate (ne4) at (-8.1,-1.3); \draw node[above] at (ne4) {$u_{n4}$}; \draw[->] (BM4)--(M4end);
\coordinate (BM5) at (-6.5,-0.62); \filldraw[blue] (BM5) circle(0.08);\coordinate (M5end) at (-6.0,-1.3);
\coordinate (ne5) at (-5.8,-1.3); \draw node[above] at (ne5) {$u_{n5}$}; \draw[->] (BM5)--(M5end);
\coordinate (BM6) at (-5.4,0.2); \filldraw[blue] (BM6) circle(0.08);\coordinate (M6end) at (-4.89,-0.45);
\coordinate (ne6) at (-4.7,-0.4); \draw node[above] at (ne6) {$u_{n6}$}; \draw[->] (BM6)--(M6end);
\coordinate (BM8) at (-5.4,1.7); \filldraw[blue] (BM8) circle(0.08);\coordinate (M8end) at (-4.6,1.95);
\coordinate (ne8) at (-4.9,1.9); \draw node[above] at (ne8) {$u_{n8}$}; \draw[->] (BM8)--(M8end);
\coordinate (BM7) at (-5.15,0.9); \filldraw[blue] (BM7) circle(0.08);\coordinate (M7end) at (-4.35,1.14);
\coordinate (ne7) at (-4.6,1.1); \draw node[above] at (ne7) {$u_{n7}$}; \draw[->] (BM7)--(M7end);
\coordinate (center) at (-7,0.7);\draw node at (center) {T};
\draw node at (center) {T}; \draw (B1)--(B2)--(B3)--(B4)--(B5)--cycle;\coordinate (A1) at (-2,0,-2); \coordinate (A2) at (1.5, 0,-2);
\coordinate (A3) at (1.5, 1.5,-2);\coordinate (A4) at (-2,2,-2);
\coordinate (A5) at (-2,0,2.5); \coordinate (A6) at (1.6, 0,1.6);
\coordinate (A7) at (1.6, 2.4,2.5); \coordinate (A8) at (-2,2.5,2.5);
\coordinate (SB1) at (-2.3,1.8,-1.8); \filldraw[green] (SB1) circle(0.08);
\coordinate (SB2) at (-2.05,2.3,1.2); \filldraw[green] (SB2) circle(0.08);
\coordinate (SB3) at (0.7,1.3,-2.75); \filldraw[green] (SB3) circle(0.08);
\coordinate (SB4) at (-1.4,1.8,-2.25); \filldraw[green] (SB4) circle(0.08);
\coordinate (SB5) at (1.6,1.8,-0.6); \filldraw[green] (SB5) circle(0.08);
\coordinate (SB6) at (1.57,2.2,1.6); \filldraw[green] (SB6) circle(0.08);
\coordinate (SB7) at (0.9,2.2,1.9); \filldraw[green] (SB7) circle(0.08);
\coordinate (SB8) at (-1.7,2.26,1.9); \filldraw[green] (SB8) circle(0.08);
\coordinate (SB9) at (-2.24,1.8,1.9); \filldraw[green] (SB9) circle(0.08);
\coordinate (SB10) at (-2.24,0.5,1.9); \filldraw[green] (SB10) circle(0.08);
\coordinate (SB11) at (-1.96,1.5,-1.9); \filldraw[green] (SB11) circle(0.08);
\coordinate (SB13) at (-1.49,0.5,-0.0); \filldraw[green] (SB13) circle(0.08);
\coordinate (SB46) at (-1.0,0.5,-0.69); \filldraw[green] (SB46) circle(0.08);
\coordinate (SB47) at (1.5,0.5,-0.69); \filldraw[green] (SB47) circle(0.08);
\coordinate (SB14) at (-1.49,0.5,3.0); \filldraw[green] (SB14) circle(0.08);
\coordinate (SB17) at (2.06,0.5,1.8); \filldraw[green] (SB17) circle(0.08);
\coordinate (SB18) at (2.02,0.5,0.2); \filldraw[green] (SB18) circle(0.08);
\coordinate (SB12) at (-1.96,0.5,-1.9); \filldraw[green] (SB12) circle(0.08);
\coordinate (SB15) at (-1.1,0.5,3.67); \filldraw[green] (SB15) circle(0.08);
\coordinate (SB16) at (1.76,0.5,3); \filldraw[green] (SB16) circle(0.08);
\coordinate (SB19) at (2.35,1.2,0.2); \filldraw[green] (SB19) circle(0.08);
\coordinate (SB20) at (3.1,2.7,2.2); \filldraw[green] (SB20) circle(0.08);\coordinate (M1485mid) at (-2.0,1.2,0.5);  \filldraw[red] (M1485mid)circle(0.08);  \draw node[right] at (M1485mid) {$u_{b,f_1}$};
\coordinate (M1562mid) at (-0.3,0.0,-0.2);  \filldraw[red] (M1562mid)circle(0.08);\draw node[above] at (M1562mid) {$u_{b,f_3}$};
\coordinate (M2376mid) at (1.65,1.2,0.5);    \filldraw[red] (M2376mid)circle(0.08);\draw node[above] at (M2376mid) {$u_{b,f_2}$};
\coordinate (M3487mid) at (0,2.2,0.5);  \filldraw[red] (M3487mid)circle(0.08); \draw node[below] at (M3487mid) {$u_{b,f_4}$};
\coordinate (M5678mid) at (0.1,1.5,3.0);   \filldraw[red] (M5678mid)circle(0.08);\draw node[below] at (M5678mid) {$u_{b,f_5}$};
 \coordinate (M1234mid) at (-0.5,0.98,-2.6);   \filldraw[red] (M1234mid)circle(0.08); \draw node[below] at (M1234mid) {$u_{b,f_6}$};
\coordinate (SM1485mid1) at (-2.4,1.5,0.5);  \filldraw[blue] (SM1485mid1)circle(0.08);
\coordinate (SM1end) at (-3.2,1.5,0.5); \coordinate (ne1) at (-3.1,1.5,0.5); \draw node[above] at (ne1) {$u_{n1}$};
 \draw[->] (SM1485mid1)--(SM1end);
\coordinate (SM1485mid2) at (-2.4,0.8,0.5);  \filldraw[blue] (SM1485mid2)circle(0.08);
\coordinate (SM2end) at (-3.23,0.8,0.5); \coordinate (ne2) at (-3.2,0.8,0.5); \draw node[above] at (ne2) {$u_{n2}$};
 \draw[->] (SM1485mid2)--(SM2end);\coordinate (SM1485mid3) at (-2.2,0.4,1);  \filldraw[blue] (SM1485mid3)circle(0.08);
\coordinate (SM3end) at (-3.0,0.4,1); \coordinate (ne3) at (-3.0,0.4,0.9); \draw node[above] at (ne3) {$u_{n3}$};
 \draw[->] (SM1485mid3)--(SM3end);
\coordinate (SM3487mid1) at (1.6,2.4,0.6);  \filldraw[blue] (SM3487mid1)circle(0.08);
\coordinate (SM10end) at (2.2,3,0.7); \coordinate (ne16) at (2.55,2.95,0.9); \draw node[above] at (ne16) {$u_{n16}$};
 \draw[->] (SM3487mid1)--(SM10end);
\coordinate (SM3487mid2) at (1.0,2.4,0.6);  \filldraw[blue] (SM3487mid2)circle(0.08);
\coordinate (SM11end) at (1.6,3,0.7); \coordinate (ne17) at (1.96,3.1,1.2); \draw node[above]at (ne17) {$u_{n17}$};
\draw[->] (SM3487mid2)--(SM11end);
\coordinate (SM3487mid3) at (0.5,2.4,0.6);  \filldraw[blue] (SM3487mid3)circle(0.08);
\coordinate (SM12end) at (1.1,3,0.7); \coordinate (ne18) at (1.2,3.15,1.2); \draw node[above] at (ne18) {$u_{n18}$};
 \draw[->] (SM3487mid3)--(SM12end);
\coordinate (M1562mid1) at (1.5,1.0,3.6);  \filldraw[blue] (M1562mid1)circle(0.08);
\coordinate (SM13end) at (2.85,1.44,6.9); \coordinate (ne9) at (2.43,1,5.6); \draw node[below] at (ne9) {$u_{n9}$};
 \draw[->] (M1562mid1)--(SM13end);
 \coordinate (M1562mid2) at (0,1.0,3.6);  \filldraw[blue] (M1562mid2)circle(0.08);
\coordinate (SM14end) at (0.5,0.65,4.8); \coordinate (ne8) at (0.8,1,5.5); \draw node[below] at (ne8) {$u_{n7}$};
 \draw[->] (M1562mid2)--(SM14end);
  \coordinate (M1562mid3) at (0.7,1.0,3.6);  \filldraw[blue] (M1562mid3)circle(0.08);
\coordinate (SM15end) at (2.15,1.6,7.28); \coordinate (ne15) at (1.5,1,5.5); \draw node[below] at (ne15) {$u_{n8}$};\draw[->] (M1562mid3)--(SM15end);
\draw (A1)--(A4)--(A8)--(A5)--cycle;
\draw (A2)--(A3)--(A7)--(A6)--cycle;
\draw (A1)--(A5)--(A6)--(A2)--cycle;
\draw (A3)--(A4)--(A8)--(A7)--cycle;
\draw (A1)--(A2)--(A3)--(A4)--cycle;
\draw (A5)--(A6)--(A7)--(A8)--cycle;
\coordinate (M12end) at (4,0,-2);
\coordinate (M14end) at (-2,4,-2);
\coordinate (M15end) at (-2,0.0,4.5);
\coordinate (center) at (-0.6,1.2,0.5);
\draw node[right] at (center) {T};
\end{tikzpicture}
\caption{Local degrees of freedom for the finite element space $V_3(T)$ on a pentagonal element (left) and a hexahedral element (right).}\label{polygonal-element}
\end{center}
\end{figure}
\end{remark}

  % For simplicity, we will establish the theoretical analysis for the weak Galerkin scheme \eqref{WG-scheme} since the WG scheme \eqref{WG-scheme} and its Schur complement \eqref{WG-schemee}-\eqref{WG-schemeee} have the same numerical approximation.

\section{Solution existence and uniqueness}\label{Section:seu}
On each element $T\in{\cal T}_{h}$, denote by $Q_0$ the usual $L^2$ projection operator   onto $P_{k}(T)$. For any $\phi\in H^2(\O)$, let
$$
Q_{h}\phi=\{Q_{0}\phi,Q_b\phi,Q_f\phi,Q_n(\nabla\phi\cdot\bn_f)\bn_f\}.
$$
Similarly, denote by $\mathbb{Q}_h$ the $L^2$ projection operator onto $P_{k-2}(T).$

\begin{lemma}\label{commutative properties}
For any $\phi\in H^{2}(T)$, the following commutative property holds true
\begin{eqnarray}
\label{comu2}
\nabla_{w,\pmb{\tau}}Q_{h}\phi&=&Q_n(\bn_f\times(\nabla\phi\times\bn_f)),\\
\label{comu1}
\pa_{ij,w}^{2} (Q_{h}\phi) &=&\mathbb{Q}_h(\pa_{ij}^{2}\phi),\quad i,j=1,\ldots,d.
\end{eqnarray}
\end{lemma}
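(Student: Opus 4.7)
The plan is to verify each identity by substituting $v = Q_h\phi$ into the defining equation of the corresponding weak operator and then exploiting the fact that each test polynomial appearing in those definitions lies in the range of the relevant $L^2$ projection, so that the projections $Q_0$, $Q_b$, $Q_f$, $Q_n$ can be stripped away. Classical integration by parts then reduces what remains to a tautology.

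For \eqref{comu2}, I would set $\pmb G := Q_n(\bn_f\times(\nabla\phi\times\bn_f))$ and check that $\pmb G\in \W_{k-2}(\F)$: its components lie in $P_{k-2}(\F)$ by construction, and because $\bn_f$ is constant on $\F$ the tangentiality condition $\pmb G\cdot\bn_f=0$ is preserved by the componentwise projection $Q_n$. By the uniqueness built into Definition \ref{def2.1}, it then suffices to verify that $\pmb G$ satisfies \eqref{discrete weak gradient-1} with $v=Q_h\phi$. On the left, $\pmb\psi\times\bn_f$ has components in $P_{k-2}(\F)$, so $Q_n$ can be peeled off, leaving $\langle \nabla\phi,\pmb\psi\times\bn_f\rangle_{\F}$ (the normal part of $\nabla\phi$ drops out since $\pmb\psi\times\bn_f$ is tangential to $\F$). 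On the right, $(\nabla\times\pmb\psi)\cdot\bn_f\in P_{k-3}(\F)$ and $\pmb\psi\cdot\pmb\tau\in P_{k-2}(e)$ on each edge $e\subset\pa\F$, so $Q_f$ and $Q_b$ can also be dropped. The identity then reduces to
\[
\int_{\F}\nabla\phi\cdot(\pmb\psi\times\bn_f)\,dS = -\int_{\F}\phi\,(\nabla\times\pmb\psi)\cdot\bn_f\,dS + \int_{\pa\F}\phi\,\pmb\psi\cdot\pmb\tau\,d\ell,
\]
which follows from Stokes' theorem applied to the vector field $\phi\pmb\psi$ together with the product rule $\nabla\times(\phi\pmb\psi)=\phi\,\nabla\times\pmb\psi+\nabla\phi\times\pmb\psi$ and the triple-product identity $(\nabla\phi\times\pmb\psi)\cdot\bn_f=\nabla\phi\cdot(\pmb\psi\times\bn_f)$.

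For \eqref{comu1}, I would plug $v=Q_h\phi$ directly into \eqref{discrete weak partial derivetive}. Combining \eqref{decomposition} with \eqref{comu2} and using constancy of $\bn_f$ on $\F$, the weak gradient of $Q_h\phi$ on each face simplifies via the orthogonal decomposition $\nabla\phi=(\nabla\phi\cdot\bn_f)\bn_f+\bn_f\times(\nabla\phi\times\bn_f)$ to $Q_n(\nabla\phi)$, so that $(Q_h\phi)_{gi}=Q_n(\pa_i\phi)$. For any $\varphi\in P_{k-2}(T)$ one has $\pa_{ji}^2\varphi\in P_{k-4}(T)\subset P_{k-2}(T)$, $\pa_j\varphi|_{\F}\in P_{k-3}(\F)$, and $\varphi\,n_j|_{\F}\in P_{k-2}(\F)$, so $Q_0$, $Q_f$, and $Q_n$ can all be removed from \eqref{discrete weak partial derivetive} to yield
\[
(\pa_{ij,w}^2 Q_h\phi,\varphi)_T = (\phi,\pa_{ji}^2\varphi)_T - \langle \phi\,n_i,\pa_j\varphi\rangle_{\pa T} + \langle \pa_i\phi,\varphi\,n_j\rangle_{\pa T}.
\]
Two successive integrations by parts on the volume term (first moving $\pa_i$ off $\pa_j\varphi$, then moving $\pa_j$ off $\varphi$) produce $(\pa_{ij}^2\phi,\varphi)_T$ with boundary contributions that exactly cancel the two surface integrals above. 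Since this holds for every $\varphi\in P_{k-2}(T)$ and both $\pa_{ij,w}^2 Q_h\phi$ and $\mathbb{Q}_h(\pa_{ij}^2\phi)$ lie in $P_{k-2}(T)$, the identity follows.

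The main obstacle is keeping the surface calculus straight in \eqref{comu2}: the argument as written is written for $d=3$ with Stokes' theorem on $\F$; in $d=2$ the face $\F$ is a line segment and $\pa\F$ consists of two endpoints, so the formula degenerates to ordinary 1D integration by parts and one must interpret $\pmb\psi\cdot\pmb\tau$ at the endpoints with the appropriate signs (consistently with $v_{b,e}=\mathrm{const}$ in 2D, as noted after the definition of $V_k(T)$). A secondary but worthwhile check is that the decomposition $Q_n(\nabla\phi)=Q_n((\nabla\phi\cdot\bn_f)\bn_f)+Q_n(\bn_f\times(\nabla\phi\times\bn_f))$ commutes with the normal/tangential splitting, which it does precisely because $\bn_f$ is constant on each face $\F$.
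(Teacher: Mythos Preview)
Your proof is correct and follows essentially the same route as the paper: strip the projections $Q_0,Q_f,Q_b,Q_n$ using the polynomial degrees of the test functions, then reduce to classical integration by parts (Stokes on $\F$ for \eqref{comu2}, two integrations by parts in $T$ for \eqref{comu1}), with the intermediate observation $(Q_h\phi)_g=Q_n(\nabla\phi)$ appearing in both arguments. The only difference is that the paper cites \cite{WWYZ-JCAM2021} for \eqref{comu2} whereas you supply the Stokes-theorem computation explicitly; your self-contained derivation is correct (and your remarks on the $d=2$ degeneration and on why componentwise $Q_n$ preserves tangentiality are accurate).
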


\begin{proof} First of all, the identity \eqref{comu2} has been established in \cite{WWYZ-JCAM2021}. Hence, the gradient representation \eqref{decomposition} for ${(Q_h\phi)}_g$ has the following form
\begin{eqnarray}\label{decomposition-new}
 %\begin{split}
(Q_h\phi)_g&=Q_n(\nabla\phi\cdot\bn_f)\bn_f+Q_n(\bn_f\times(\nabla\phi\times\bn_f))\\
&=Q_n(\nabla\phi).
% \end{split}
\end{eqnarray}
In other words, the weak gradient of $Q_h\phi$ is the $L^2$ projection of the classical gradient of $\phi$ on each face $\F\subset\pa T$.
Thus, from \eqref{discrete weak partial derivetive} and the usual integration by parts we obtain
\begin{equation*}
\begin{split}
 &(\pa_{ij,w}^{2}(Q_h\phi),\varphi)_{T}\\
=&(Q_0\phi,\pa_{ji}^{2}\varphi)_{T}-\langle Q_f\phi n_i,\pa_j\varphi\rangle_{\pa T}
  +{\langle Q_n(\nabla\phi)_i,\varphi n_{j}\rangle_{\pa T}}\\
=&(\phi,\pa_{ji}^{2}\varphi)_T-\langle\phi n_i,\pa_{j}\varphi\rangle_{\pa T}+\langle(\nabla\phi)_i,\varphi n_j\rangle_{\pa T}\\
=&(\pa_{ij}^2\phi,\varphi)_T\\
=&(\mathbb{Q}_h(\pa_{ij}^{2}\phi),\varphi)_T
\end{split}
\end{equation*}
for all $\varphi\in P_{k-2}(T)$. This verifies the identity \eqref{comu1}.
\end{proof}

%\begin{remark}\label{Remark:June22-1}
%    When $v=Q_h\phi$ is the $L^2$ projection of a smooth function $\phi\in H^2(T)$, the weak gradient of $v$ on each face $\F\subset \pa T$ is given by (see \eqref{decomposition})
%    $$
%   \pmb{v_g} = Q_n(\nabla\phi\cdot\bn_f)\bn_f + %\nabla_{w,\pmb{\tau}}Q_{h}\phi
%    $$
%    Using \eqref{comu2} we obtain
%%       \pmb{v_g} = Q_n(\nabla\phi\cdot\bn_f)\bn_f + %Q_n(\bn_f\times(\nabla\phi\times\bn_f)) = Q_n \nabla\phi.
%    \end{equation}
% In other words, the weak gradient of $Q_h\phi$ is the $L^2$ projection of the classical gradient of $\phi$ on each face $\F\subset\pa T$.
%\end{remark}

Observe that the bilinear form $a_s(v,v)$ induces a semi-norm in the finite element space $V_h$ given by
\begin{equation}\label{tri-semibar}
\3barv\3bar=\big(a_s(v,v)\big)^{1/2}.
\end{equation}

\begin{lemma}\label{tri-norm}
The semi-norm $\3barv\3bar$ defined by \eqref{tri-semibar} is a norm in the subspace $V_h^0$.
\end{lemma}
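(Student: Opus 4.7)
The plan is to verify positive definiteness: given $v \in V_h^0$ with $\3bar v\3bar = 0$, I would conclude $v \equiv 0$. Since $a_s(v,v) = 0$, the Hessian term $(\pa^2_w v, \pa^2_w v)$ and each of the four penalty pieces in $s(v,v)$ must vanish separately. The three projection penalties give
\begin{equation*}
v_{b,e} = Q_b v_0, \quad v_{b,f} = Q_f v_0, \quad v_n = Q_n(\nabla v_0 \cdot \bn_f),
\end{equation*}
so that $v|_T$ coincides with $Q_h v_0|_T$ for every $T \in {\cal T}_h$. The commutativity identity \eqref{comu2} in Lemma \ref{commutative properties} then yields $\nabla_{w,\pmb{\tau}} v = Q_n(\bn_f \times (\nabla v_0 \times \bn_f))$ for all $k \geq 3$, and combining the normal and tangential parts gives $\pmb{v_g} = Q_n(\nabla v_0)$ on every face $\F$. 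In particular $v_0 - v_{b,f}$ is $L^2$-orthogonal to $P_{k-3}(\F)$, and $\pa_i v_0 - v_{gi}$ is $L^2$-orthogonal to $P_{k-2}(\F)$.

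I next show that $v_0$ is affine on each element. Inserting $\varphi = \pa^2_{ij} v_0 \in P_{k-2}(T)$ into \eqref{error equation-13} and using the two orthogonalities above to kill both boundary integrals leaves
\begin{equation*}
0 = (\pa^2_{ij,w} v, \pa^2_{ij} v_0)_T = \|\pa^2_{ij} v_0\|_T^2.
\end{equation*}
Summing over $i,j$ yields $v_0 \in P_1(T)$ on every element.

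Propagating $v_0 \equiv 0$ across $\Omega$ is the step I expect to require the most care. On each interior face $\F$ shared by $T_1$ and $T_2$, single-valuedness of $v_{b,f}$ together with $v_{b,f} = Q_f v_0$ forces the $L^2$-projections of $v_0|_{T_1,\F}$ and $v_0|_{T_2,\F}$ onto $P_{k-3}(\F)$ to agree. For $k \geq 4$ this already yields pointwise continuity of $v_0$ since $Q_f$ preserves affine functions. For $k = 3$ only averages match, so I would invoke the extra tangential-derivative stabilizer: vanishing of the $\delta_{k,3}$ term together with single-valuedness of $\nabla_{w,\pmb{\tau}} v$ forces $D_{\pmb{\tau}} v_0$ to agree across $\F$, and matching tangential gradients plus matching averages upgrade to full pointwise continuity. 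An analogous argument using single-valuedness of $v_n \bn_f$ and the identity $v_n = \nabla v_0 \cdot \bn_f$ (valid because $\nabla v_0$ is constant on each element and $Q_n$ preserves constants) gives continuity of the normal derivative across interior faces. The boundary conditions $v_{b,f}|_{\pa\Omega} = 0$ and $v_n|_{\pa\Omega} = 0$ translate into $v_0 = 0$ and $\nabla v_0 \cdot \bn = 0$ on every boundary face; since $v_0$ is affine on each element, these data on a single face force $v_0 \equiv 0$ on the adjacent boundary element. Propagating inward via the interior continuity of $v_0$ and $\nabla v_0 \cdot \bn$ then gives $v_0 \equiv 0$ on all of $\Omega$.

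Finally, the projection identities from the first step immediately give $v_{b,e} = Q_b v_0 = 0$, $v_{b,f} = Q_f v_0 = 0$, and $v_n = Q_n(\nabla v_0 \cdot \bn_f) = 0$, so $v = 0$. The main technical subtlety is the $k = 3$ case, where $P_{k-3}(\F) = P_0(\F)$ only captures averages and the extra tangential-derivative stabilizer is indispensable for recovering continuity across faces; for $k \geq 4$ the uniqueness argument is cleaner because $P_{k-3}(\F)$ already contains $P_1(\F)$.
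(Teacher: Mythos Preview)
Your argument is correct, but it takes a more laborious route than the paper and contains one misconception worth flagging.

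The paper, after establishing $v_0\in P_1(T)$ via \eqref{comu1}, argues \emph{gradient-first}: since $\nabla v_0$ is constant on each $T$, the identity $\nabla v_0 = v_n\bn_f + \nabla_{w,\pmb{\tau}}v$ holds exactly on every face (both sides lie in $[P_0(\F)]^d$), and the two pieces on the right are single-valued on interior faces and vanish on $\partial\Omega$. Thus $\nabla v_0\in C^0(\Omega)$ with zero boundary trace, and being piecewise constant it must vanish identically. Only then is $v_0$ constant, and continuity of $v_0$ follows trivially from $v_{b,f}=Q_f v_0$ since $P_0\subset P_{k-3}(\F)$ for \emph{every} $k\ge 3$. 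No case distinction is needed, and the $\delta_{k,3}$ stabilizer is never used.

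Your route instead matches traces of $v_0$ first via $Q_f v_0$, which forces the $k=3$ versus $k\ge 4$ split because $P_{k-3}(\F)$ does not contain linears when $k=3$. Your claim that the $\delta_{k,3}$ stabilizer is ``indispensable'' is not correct: the identity $Q_n D_{\pmb{\tau}}v_0=\nabla_{w,\pmb{\tau}}v$ you appeal to at that step is already a consequence of the commutativity \eqref{comu2} you invoked earlier (indeed $v|_T=Q_hv_0$), so the extra stabilizer contributes nothing to positive definiteness. Note also that for $k=3$ your boundary assertion ``$v_{b,f}|_{\partial\Omega}=0$ translates into $v_0=0$'' only yields zero face-average; you still need $D_{\pmb{\tau}}v_0=\nabla_{w,\pmb{\tau}}v=0$ on $\partial\Omega$ (or the edge condition $v_{b,e}=0$) to upgrade this to $v_0=0$ pointwise on the boundary face, exactly as in your interior argument.
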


\begin{proof}
It suffices to show that $\3bar v\3bar =0$ implies $v=0$. To this end, assume $\3barv\3bar=0$ for some $v\in V_h^0$. From \eqref{tri-semibar} we have $\pa^2_{w}v=0$ and $s(v,v)=0$, which implies $\pa^2_{ij,w}v=0$ for $i,j=1,\ldots,d$ on each $T$, $Q_bv_0=v_{b,e}$ on each $\pa \F$, $Q_fv_0=v_{b,f}$ and $Q_n(\nabla v_0)\cdot\bn_f=v_n$ on each $\pa T$. Thus, on each element $T\in \T_h$ we have $Q_h v_0=v$ so that by using \eqref{comu1}
$$
\partial_{ij}^2 v_0 = \mathbb{Q}_h\partial_{ij}^2 v_0 = \partial_{ij,w}^2 (Q_hv_0) =\partial_{ij,w}^2 v =0,\quad i,j=1,\ldots, d.
$$
Hence, $\nabla v_0=const$ on each $T\in\T_h$. Note that on each face $\F\in\partial T$ we have
$$\nabla v_0=(\nabla v_0\cdot\bn_f)\bn_f+\bn_f\times(\nabla v_0\times\bn_f),
$$
which, together with $Q_n(\bn_f\times(\nabla v_0\times\bn_f))    =\nabla_{w,\pmb{\tau}}v$ and {$Q_n(\nabla v_0)\cdot\bn_f =v_n$}, gives rise to $\nabla v_0=v_n \bn_f+\nabla_{w,\pmb{\tau}}v$ on each face $\F\in\F_h$ and hence $\nabla v_0\in C^0(\O)$. Next, with $v_{b,f}=0$ on each $\F\subset\pa\O$ and $v_{b,e}=0$ on each $e\subset\pa\O$ we have from \eqref{discrete weak gradient-1} that $\nabla_{w,\pmb{\tau}}v=0$ on each $\F\subset\pa\O$.
This, together with $v_n=0$ on each $\F\subset\pa\O$, gives $\nabla v_0=0$ on $\F\subset\pa\O$ and further $\nabla v_0=0$ in the domain $\O$ since $\nabla v_0=const$  on each $T$ and $\nabla v_0\in C^0(\O)$. Hence, $v_n=0$ on each $\F$ and $v_0=const$ on each $T$. This further leads to $v_0=Q_bv_0=v_{b,e}$ on each $\pa \F$ and $v_0=Q_fv_0=v_{b,f}$ on each $\pa T$, and hence $v_0\in C^0(\O)$. From $v_{b,e}=0$ on $e\subset\pa\O$ and $v_{b,f}=0$ on each $\F\subset\pa\O$ we have $v_0=0$ in $\O$. Finally, from $v_{b,e}=Q_bv_0$ on each $\pa \F$ and $v_{b,f}=Q_fv_0$ on each $\pa T$ we have $v_{b,e}=0$ on each $\pa \F$ and $v_{b,f}=0$ on each $\pa T$. This completes the proof of the lemma.
\end{proof}

\begin{lemma}\label{unique}
The weak Galerkin scheme \eqref{WG-scheme} has one and only one numerical approximation.
\end{lemma}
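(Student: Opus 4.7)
The plan is to exploit the fact that \eqref{WG-scheme} is a square linear system on the finite-dimensional space $V_h$, so existence is equivalent to uniqueness. I would therefore focus on uniqueness and then invoke a dimension-counting argument for existence.

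First, suppose $u_h^{(1)}$ and $u_h^{(2)}$ are two solutions of \eqref{WG-scheme}. Their difference $w=u_h^{(1)}-u_h^{(2)}$ satisfies $w_{b,e}=0$ on every boundary edge $e\subset\partial\Omega$, $w_{b,f}=0$ and $w_n=0$ on every boundary face $\F\subset\partial\Omega$, so that $w\in V_h^0$. By the linearity of $a_s(\cdot,\cdot)$ and the fact that the right-hand side $(g,v_0)$ is the same for both solutions, $w$ satisfies
\begin{equation*}
a_s(w,v)=0\qquad\forall v\in V_h^0.
\end{equation*}

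Second, testing this identity with $v=w\in V_h^0$ gives $a_s(w,w)=0$, i.e., $\3bar w\3bar=0$ by the definition \eqref{tri-semibar}. Applying Lemma \ref{tri-norm}, which establishes that $\3bar\cdot\3bar$ is a bona fide norm on $V_h^0$, we conclude that $w=0$, so $u_h^{(1)}=u_h^{(2)}$. This proves uniqueness.

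Finally, for existence, I would reduce \eqref{WG-scheme} to a linear system with the same number of unknowns as equations by a standard lifting: choose any $\tilde u_h\in V_h$ satisfying the prescribed Dirichlet data $\tilde u_{b,e}=Q_b\zeta$, $\tilde u_{b,f}=Q_f\zeta$, $\tilde u_n=Q_n\xi$ on $\partial\Omega$, and seek the correction $u_h-\tilde u_h\in V_h^0$. The resulting problem amounts to a square linear system for the coefficients of $u_h-\tilde u_h$, and the uniqueness argument above shows that the corresponding homogeneous system has only the trivial solution. Hence the system is invertible and \eqref{WG-scheme} admits one and only one solution. The main (and only) obstacle is the appeal to Lemma \ref{tri-norm}; everything else is routine linear algebra.
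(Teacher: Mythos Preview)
Your proof is correct and follows essentially the same approach as the paper: take the difference of two hypothetical solutions, test with itself, and invoke Lemma~\ref{tri-norm} to conclude the difference vanishes. The only difference is that the paper dispenses with existence in one line (``It suffices to verify the uniqueness''), whereas you spell out the lifting/square-system argument explicitly; this is a harmless elaboration of the same idea.
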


\begin{proof}
It suffices to verify the uniqueness of the numerical approximation. To this end, assume that $u_h^{(1)}$ and $u_h^{(2)}$ are two solutions of \eqref{WG-scheme}. It is clear that
\begin{equation}\label{WG-unique}
a_s(u_h^{(1)}-u_h^{(2)},v)=0,~~~\forall v\in V_h^0.
\end{equation}
By letting $v=u_h^{(1)}-u_h^{(2)}\in V_h^0$ in \eqref{WG-unique} we obtain
$$\3baru_h^{(1)}-u_h^{(2)}\3bar=0,$$
which implies $u_h^{(1)}=u_h^{(2)}$ from Lemma \ref{tri-norm}. This completes the proof of the lemma.
\end{proof}

\section{Error equations}\label{Section:error-equation}
Let $u$ be the exact solution of the model equation \eqref{model-problem}  and $u_h\in V_h$ be the numerical solution of the WG scheme \eqref{WG-scheme}, respectively. Denote by
\begin{equation}\label{error function}
e_h=Q_hu-u_h
\end{equation}
the error function between the $L^2$ projection of the exact solution and its WG finite element approximation $u_h$.

\begin{lemma}\label{error equation}
The error function $e_h$ defined in \eqref{error function} satisfies the following error equation
\begin{eqnarray}\label{Error-equation}
a_s(e_h,v)=\zeta_u(v),\qquad\forall v\in V_h^0,
\end{eqnarray}
where $\zeta_u(v)$ is given by
\begin{equation}\label{error equation-remainder}
\begin{split}
\zeta_u(v)
=&s(Q_hu,v)+\sum_{T\in {\cal T}_h}\sum_{i,j=1}^d\langle v_0-v_{b,f},\pa_j(\mathbb{Q}_h(\pa^2_{ij}u)-\pa^2_{ij}u) n_i\rangle_{\pa T}\\
 &+\sum_{T\in{\cal T}_h}\sum_{i,j=1}^d\langle\pa_iv_0-v_{gi},(\pa^2_{ij}u-\mathbb{Q}_h(\pa^2_{ij}u)) n_j\rangle_{\pa T}.
\end{split}
\end{equation}
\end{lemma}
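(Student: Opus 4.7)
My plan is to rewrite $a_s(e_h,v)=a_s(Q_hu,v)-a_s(u_h,v)$ and use the scheme \eqref{WG-scheme} to replace the second term by $(g,v_0)$, so that the task reduces to showing $a_s(Q_hu,v)=(g,v_0)+\zeta_u(v)$. The stabilizer piece $s(Q_hu,v)$ is already part of $\zeta_u$, so everything comes down to evaluating $(\pa^2_w Q_hu,\pa^2_w v)$.

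For that evaluation I would first invoke the commutative identity \eqref{comu1} to write $\pa^2_{ij,w}Q_hu=\mathbb{Q}_h(\pa^2_{ij}u)$, and then express $(\pa^2_{ij,w}v,\mathbb{Q}_h(\pa^2_{ij}u))_T$ through the identity \eqref{error equation-13} (which is just the definition \eqref{discrete weak partial derivetive} after integration by parts). Next, I would split the test polynomial inside as $\mathbb{Q}_h(\pa^2_{ij}u)=\pa^2_{ij}u+(\mathbb{Q}_h(\pa^2_{ij}u)-\pa^2_{ij}u)$ and distribute by linearity. The volume contribution involving the difference $\mathbb{Q}_h(\pa^2_{ij}u)-\pa^2_{ij}u$ vanishes because $\pa^2_{ij}v_0\in P_{k-2}(T)$ is $L^2$-orthogonal to it, so only the two boundary integrals remain; these reproduce exactly the last two sums in \eqref{error equation-remainder}.

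It then remains to show that the ``smooth part''
\[
\sum_{T\in\T_h}\sum_{i,j=1}^d\Big[(\pa^2_{ij}v_0,\pa^2_{ij}u)_T+\langle(v_0-v_{b,f})n_i,\pa_j\pa^2_{ij}u\rangle_{\pa T}-\langle(\pa_iv_0-v_{gi})n_j,\pa^2_{ij}u\rangle_{\pa T}\Big]
\]
equals $(g,v_0)$. I would integrate by parts twice on $(\pa^2_{ij}v_0,\pa^2_{ij}u)_T$, moving both derivatives off $v_0$ so the element interior becomes $(v_0,\pa^2_i\pa^2_j\pa^2_{ij}u)_T$; summing over $i,j$ turns this into $(v_0,\Delta^2u)_T=(v_0,g)_T$ by the PDE. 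The boundary terms generated by the two integrations by parts will combine with the $v_0n_i$ and $\pa_iv_0\,n_j$ contributions already present, leaving only the terms with factors $v_{b,f}$ and $v_{gi}$.

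The main obstacle — and the only subtle step — is showing that these leftover ``clean'' boundary integrals, namely $-\langle v_{b,f}n_i,\pa_j\pa^2_{ij}u\rangle_{\pa T}+\langle v_{gi}n_j,\pa^2_{ij}u\rangle_{\pa T}$ summed over $T$, vanish globally. On every interior face this is a cancellation argument: $v_{b,f}$ and the vector $v_n\bn_f$ are single-valued across $\F\in\F_h^0$ by the definition of $V_h$, and one can check from \eqref{discrete weak gradient-1} that $\nabla_{w,\pmb{\tau}}v$ depends only on the face-intrinsic data $v_{b,f},v_{b,e}$, hence $\pmb{v_g}=v_n\bn_f+\nabla_{w,\pmb{\tau}}v$ is single-valued on $\F$; since the outward normals of the two adjacent elements have opposite signs and $\pa^2_{ij}u$ is continuous, the two contributions cancel. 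On a boundary face $\F\subset\pa\O$ one uses $v\in V_h^0$ (so $v_{b,f}=0$, $v_n=0$, and $v_{b,e}=0$ on the edges of $\F$), together with \eqref{discrete weak gradient-1}, to conclude $\nabla_{w,\pmb{\tau}}v=0$ and therefore $\pmb{v_g}=\mathbf0$, killing the remaining terms. Gathering the smooth-part result $(g,v_0)$, the $\zeta_u$ boundary terms, and the $s(Q_hu,v)$ piece yields \eqref{Error-equation}.
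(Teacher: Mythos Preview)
Your proposal is correct and follows essentially the same route as the paper: the key ingredients are identical — the commutative identity \eqref{comu1}, the integrated-by-parts form \eqref{error equation-13} applied with $\varphi=\mathbb{Q}_h(\pa^2_{ij}u)$, two elementwise integrations by parts linking $(\pa^2_{ij}v_0,\pa^2_{ij}u)_T$ to $(v_0,\Delta^2u)_T$, and the global cancellation of the $v_{b,f}$ and $\pmb{v_g}$ boundary integrals via single-valuedness on interior faces and vanishing on $\pa\O$. The only organizational difference is direction: the paper starts from $(g,v_0)=(\Delta^2u,v_0)$ and works toward $(\pa^2_w Q_hu,\pa^2_w v)$, whereas you start from $a_s(Q_hu,v)$ and work back to $(g,v_0)$; neither order has a real advantage. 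One cosmetic slip: the expression ``$(v_0,\pa^2_i\pa^2_j\pa^2_{ij}u)_T$'' should read $(v_0,\pa^2_{ij}\pa^2_{ij}u)_T$.
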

\begin{proof}
Let $v\in V_h^0$. On any face $\F\subset\pa\O$, we have $v_{b,f}=0$ and $v_{b,e}=0$ on $e\subset \pa\F$. Thus, from \eqref{discrete weak gradient-1} we have
$$
\langle\nabla_{w,\pmb{\tau}}v,\pmb{\psi}\times\pmb{n}_{f}\rangle_{\F}=-\langle v_{b,f},(\nabla\times\pmb{\psi})\cdot\pmb{n}_{f}\rangle_{\F}+\langle v_{b,e},\pmb{\psi}\cdot\pmb{\tau}\rangle_{\pa \F}=0,
$$
for any $\pmb{\psi}\in \W_{k-2}(\F)$. Hence, $\nabla_{w,\pmb{\tau}}v=0$ on $\pa\O$. This, together with \eqref{decomposition} and $v_n=0$ on $\pa\O$, gives rise to $\pmb{v}_g=0$
on $\pa\O$.

By testing the model equation \eqref{model-problem} against $v_0$ and then using the usual integration by parts, we have
\begin{equation}\label{error equation-1}
\begin{split}
(g,v_0)
=&\sum_{T\in{\cal T}_h}(\Delta^2u,v_0)_T\\
=&\sum_{T\in{\cal T}_h}\sum_{i,j=1}^d(\pa^2_{ij}u,\pa^2_{ij}v_0)_T-\langle\pa^2_{ij}u,\pa_iv_0  n_j\rangle_{\pa T}
  +\langle\pa_j(\pa^2_{ij}u)  n_i,v_0\rangle_{\pa T}\\
=&\sum_{T\in{\cal T}_h}\sum_{i,j=1}^d(\pa^2_{ij}u,\pa^2_{ij}v_0)_T-\langle\pa^2_{ij}u,(\pa_iv_0-v_{gi})  n_j\rangle_{\pa T}\\
 &+\langle\pa_j(\pa^2_{ij}u)  n_i,v_0-v_{b,f}\rangle_{\pa T},
\end{split}
\end{equation}
where we used the fact that
\begin{eqnarray*}
   && \sum_{T\in{\cal T}_h}\sum_{i,j=1}^d \langle\pa^2_{ij}u,  v_{gi}   n_j\rangle_{\pa T}=0,\\
   &&\sum_{T\in{\cal T}_h}\sum_{i,j=1}^d \langle\pa_j(\pa^2_{ij}u)  n_i, v_{b,f}\rangle_{\pa T}=0,
\end{eqnarray*}
and $v_{b,f}=0, \ \bv_{g}=0$ on $\F\subset\pa\O$.

To handle the first term on last line in \eqref{error equation-1}, {we choose} $\varphi=\mathbb{Q}_h(\pa_{ij}^2u)\in P_{k-2}(T)$ in \eqref{error equation-13} and then use Lemma \ref{commutative properties} to obtain
\begin{equation}\label{error equation-3}
\begin{split}
 (\pa_{ij}^2v_0,\pa_{ij}^2u)_T=&(\pa_{ij}^2v_0,\mathbb{Q}_h(\pa_{ij}^2u))_T\\
=&(\pa_{ij,w}^2v,\mathbb{Q}_h(\pa_{ij}^2u))_T-\langle(v_0-v_{b,f})n_i,\pa_j(\mathbb{Q}_h(\pa^2_{ij}u))\rangle_{\pa T}\\
 &+\langle\pa_iv_0-v_{gi},\mathbb{Q}_h(\pa^2_{ij}u) n_j\rangle_{\pa T}\\
=&(\pa_{ij,w}^2v,\pa_{ij,w}^2Q_hu)_T-\langle(v_0-v_{b,f})n_i,\pa_j(\mathbb{Q}_h(\pa^2_{ij}u))\rangle_{\pa T}\\
 &+\langle\pa_iv_0-v_{gi},\mathbb{Q}_h(\pa^2_{ij}u) n_j\rangle_{\pa T}.
\end{split}
\end{equation}
Substituting \eqref{error equation-3} into \eqref{error equation-1} gives
\begin{equation}\label{error equation-remainder-4}
\begin{split}
(g,v_0)
=&\sum_{T\in{\cal T}_h}\sum_{i,j=1}^d(\pa_{ij,w}^2v,\pa_{ij,w}^2Q_hu)_T
  +\langle v_0-v_{b,f},\pa_j(\pa^2_{ij}u-\mathbb{Q}_h(\pa^2_{ij}u)) n_i\rangle_{\pa T}\\
 &+\langle\pa_iv_0-v_{gi},(\mathbb{Q}_h(\pa^2_{ij}u)-\pa^2_{ij}u) n_j\rangle_{\pa T}.
\end{split}
\end{equation}
Subtracting \eqref{WG-scheme} from \eqref{error equation-remainder-4} gives rise to Lemma \ref{error equation}.
\end{proof}

\section{Technical results}\label{technique-estimate}
Note that for any $T\in{\cal T}_h$ and $\phi\in H^1(T)$, the following trace inequality \cite{WY-ellip_MC2014} holds true:
\begin{equation}\label{trace inequality}
\|\phi\|_{\pa T}^2\lesssim h_T^{-1}\|\phi\|_T^2+h_T\|\nabla\phi\|_T^2.
\end{equation}
If $\phi$ is a polynomial on the element $T\in{\cal T}_h$, we have
from the inverse inequality that
\begin{equation}\label{inverse inequality}
\|\phi\|_{\pa T}^2\lesssim h_T^{-1}\|\phi\|_T^2.
\end{equation}

\begin{lemma}\label{error projection}
Assume that ${\cal T}_h$ is a finite element partition satisfying the regular assumptions described  in \cite{WY-ellip_MC2014}. Then, for any $0\leq s\leq2$, the following error estimates \cite{WY-ellip_MC2014,MWY2014} hold true:
\begin{equation}\label{q0}
   \sum_{T\in{\cal T}_h}h_T^{2s}\|\phi-Q_0\phi\|_{s,T}^2\lesssim h^{2(k+1)}\|\phi\|_{k+1}^2,
\end{equation}
 \begin{equation}\label{qh}
 \sum_{T\in{\cal T}_h}\sum_{i,j=1}^dh_T^{2s}\|\pa^2_{ij}\phi-\mathbb{Q}_h(\pa^2_{ij}\phi)\|_{s,T}^2
\lesssim h^{2(k-1)}\|\phi\|_{k+1}^2.
\end{equation}
\end{lemma}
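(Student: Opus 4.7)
The plan is to establish both inequalities elementwise first and then sum over the partition, leveraging the fact that on a shape-regular polytopal partition each element $T$ is star-shaped with respect to a ball of radius comparable to $h_T$, which enables Bramble--Hilbert--type arguments via averaged Taylor polynomials (Dupont--Scott theory), exactly as employed in the cited references.

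For the first estimate, I would first record the standard local polynomial approximation result: for any $\phi\in H^{k+1}(T)$ there exists $p\in P_k(T)$ such that $|\phi-p|_{s,T}\lesssim h_T^{k+1-s}|\phi|_{k+1,T}$ for all $0\leq s\leq k+1$. Then, since $Q_0\phi$ is the $L^2$-minimizer over $P_k(T)$, the case $s=0$ follows immediately by taking $p=Q_0\phi$ is not optimal for $H^s$ but is optimal for $L^2$, giving $\|\phi-Q_0\phi\|_T\lesssim h_T^{k+1}|\phi|_{k+1,T}$. For $1\leq s\leq 2$, I would split by the triangle inequality as
\begin{equation*}
\|\phi-Q_0\phi\|_{s,T}\leq \|\phi-p\|_{s,T}+\|p-Q_0\phi\|_{s,T},
\end{equation*}
handle the first term by Bramble--Hilbert, and control the second term (a polynomial) by the inverse inequality $\|p-Q_0\phi\|_{s,T}\lesssim h_T^{-s}\|p-Q_0\phi\|_T$, reducing it to an $L^2$ estimate already controlled. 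Multiplying by $h_T^{2s}$ and summing over $T\in\mathcal{T}_h$ would give \eqref{q0}.

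For the second estimate, I would apply the analog of the first estimate with $(\phi,k)$ replaced by $(\partial^2_{ij}\phi, k-2)$. Since $\partial^2_{ij}\phi\in H^{k-1}(T)$ and $\mathbb{Q}_h$ is the $L^2$ projection onto $P_{k-2}(T)$, the argument above yields
\begin{equation*}
\|\partial^2_{ij}\phi-\mathbb{Q}_h(\partial^2_{ij}\phi)\|_{s,T}\lesssim h_T^{k-1-s}|\partial^2_{ij}\phi|_{k-1,T}\lesssim h_T^{k-1-s}|\phi|_{k+1,T},
\end{equation*}
and \eqref{qh} follows by multiplying by $h_T^{2s}$, summing over the finitely many index pairs $(i,j)$, and summing over $T$.

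The main obstacle is establishing the local polynomial approximation on a polytopal (not necessarily simplicial or affine-equivalent to a reference element) element. Because affine reference-element mapping is unavailable, the standard Bramble--Hilbert argument has to be replaced by the averaged Taylor polynomial construction on a star-shaped domain, where shape regularity guarantees the chunkiness parameter is bounded independently of $h$. Since the cited works \cite{WY-ellip_MC2014,MWY2014} already carry out this technical step in detail for the weak Galerkin framework under exactly the shape regularity assumptions used here, I would invoke those results rather than reproduce them, and focus the written proof on the bookkeeping described above.
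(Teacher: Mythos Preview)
Your proposal is correct and follows the standard route (Bramble--Hilbert via averaged Taylor polynomials on star-shaped domains, combined with $L^2$-optimality of the projection and inverse estimates). Note that the paper itself does not supply a proof of this lemma at all: it simply states the estimates and cites \cite{WY-ellip_MC2014,MWY2014}, where the argument you sketch is carried out in detail, so you are in fact writing out more than the paper does.
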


\begin{lemma}\label{PROJECTION-ESTIMATE-68}
For any $v\in V_h$, there holds
\begin{equation}\label{EQ:June19:001}
    \big(\sum_{T\in{\cal T}_h}\sum_{i=1}^dh_T^{-1}\|Q_n(\pa_iv_0)-v_{gi}\|_{\pa T}^2\big)^{\frac{1}{2}}\lesssim\3barv\3bar.
\end{equation}

\end{lemma}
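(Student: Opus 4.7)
Fix $T\in{\cal T}_h$ and a face $\F\subset\pa T$, and set $\pmb{R}_\F := Q_n(\nabla v_0) - \pmb{v_g}$, so that $\sum_{i=1}^d \|Q_n(\pa_i v_0) - v_{gi}\|_\F^2 = \|\pmb{R}_\F\|_\F^2$. Using the expression $\pmb{v_g} = v_n\bn_f + \nabla_{w,\pmb{\tau}} v$ from \eqref{decomposition} and the fact that $\bn_f$ is constant on the planar face $\F$ (so that $Q_n$ commutes with the normal/tangential splitting),
\[
\pmb{R}_\F = \big(Q_n(\nabla v_0\cdot\bn_f) - v_n\big)\bn_f \;+\; \big(Q_n D_{\pmb{\tau}} v_0 - \nabla_{w,\pmb{\tau}} v\big),
\]
and the two summands are orthogonal in $L^2(\F)$. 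After weighting by $h_T^{-1}$ and summing, the normal piece is precisely the third contribution to $s(v,v)$ and is therefore bounded by $\3bar v\3bar^2$. For $k=3$ the tangential piece is controlled directly by the $\delta_{k,3}$ term of $s(v,v)$, so the estimate is immediate. The crux is the case $k>3$, in which the $\delta_{k,3}$ term is absent and the bound on $\pmb{B} := Q_n D_{\pmb{\tau}} v_0 - \nabla_{w,\pmb{\tau}} v$ must be extracted from the $v_{b,f}$ and $v_{b,e}$ parts of $s(v,v)$.

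The plan for $k>3$ is a duality argument. Pick $\pmb{\psi}\in\W_{k-2}(\F)$ with $\pmb{\psi}\times\bn_f = \pmb{B}$ (equivalently, $\pmb{\psi} = \bn_f\times\pmb{B}$; this is an in-plane $90^\circ$ rotation and satisfies $\|\pmb{\psi}\|_\F = \|\pmb{B}\|_\F$). Because $\pmb{\psi}\times\bn_f$ has components in $P_{k-2}(\F)$, the projection $Q_n$ in $Q_n D_{\pmb{\tau}} v_0$ may be dropped when tested against it. The surface integration-by-parts identity (using $\nabla_\F\cdot(\pmb{\psi}\times\bn_f) = (\nabla\times\pmb{\psi})\cdot\bn_f$ and $(\pmb{\psi}\times\bn_f)\cdot\pmb{\nu} = \pmb{\psi}\cdot\pmb{\tau}$, where $\pmb{\nu}$ is the outward co-normal to $\pa\F$ within $\F$) combined with the defining identity \eqref{discrete weak gradient-1} of $\nabla_{w,\pmb{\tau}} v$ collapses the squared norm to
\[
\|\pmb{B}\|_\F^2 = \langle v_{b,f} - v_0,\,(\nabla\times\pmb{\psi})\cdot\bn_f\rangle_\F + \langle v_0 - v_{b,e},\,\pmb{\psi}\cdot\pmb{\tau}\rangle_{\pa\F}.
\]
Since $(\nabla\times\pmb{\psi})\cdot\bn_f\in P_{k-3}(\F)$ and $\pmb{\psi}\cdot\pmb{\tau}\in P_{k-2}(e)$ on each $e\subset\pa\F$, the projections $Q_f$ and $Q_b$ may be inserted in front of $v_0$ without changing the right-hand side. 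Cauchy--Schwarz, the inverse inequality \eqref{inverse inequality} giving $\|(\nabla\times\pmb{\psi})\cdot\bn_f\|_\F\lesssim h_T^{-1}\|\pmb{\psi}\|_\F$, the trace inequality giving $\|\pmb{\psi}\cdot\pmb{\tau}\|_{\pa\F}\lesssim h_T^{-1/2}\|\pmb{\psi}\|_\F$, together with $\|\pmb{\psi}\|_\F=\|\pmb{B}\|_\F$, then yield
\[
\|\pmb{B}\|_\F \lesssim h_T^{-1}\|Q_f v_0 - v_{b,f}\|_\F + h_T^{-1/2}\|Q_b v_0 - v_{b,e}\|_{\pa\F}.
\]
Squaring, weighting by $h_T^{-1}$, and summing over $\F\subset\pa T$ and $T\in{\cal T}_h$, the right-hand side is controlled by exactly the first two contributions of $s(v,v)$. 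Combined with the normal-part bound this gives \eqref{EQ:June19:001}; the two-dimensional case is treated in the same way with the cross product interpreted as the canonical $90^\circ$ in-plane rotation.

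The step I expect to be the main obstacle is identifying the correct duality for the tangential piece when $k>3$: one must test the defining identity \eqref{discrete weak gradient-1} not against $\pmb{B}$ itself but against the rotated vector $\pmb{\psi}$ with $\pmb{\psi}\times\bn_f = \pmb{B}$, so that the bilinear pairing lines up with the left-hand side of \eqref{discrete weak gradient-1} and the boundary terms in the definition match exactly the $v_{b,f}$ and $v_{b,e}$ deviations already measured by the stabilizer. Once this reparameterization is recognized, the remainder is Cauchy--Schwarz plus standard polynomial trace and inverse inequalities.
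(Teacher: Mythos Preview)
Your proof is correct and follows essentially the same approach as the paper: decompose $Q_n(\nabla v_0)-\pmb{v_g}$ into its normal and tangential parts, bound the normal part by the third stabilizer term, and bound the tangential part $\pmb{B}=Q_nD_{\pmb{\tau}}v_0-\nabla_{w,\pmb{\tau}}v$ via duality using the defining identity \eqref{discrete weak gradient-1} together with the surface Stokes theorem, reducing to the $Q_fv_0-v_{b,f}$ and $Q_bv_0-v_{b,e}$ stabilizer terms. The only difference is cosmetic: the paper runs the duality argument uniformly for all $k\ge 3$ (testing against a generic $\pmb{\psi}\in\W_{k-2}(\F)$) rather than splitting off the case $k=3$, so your separate treatment of $k=3$ via the $\delta_{k,3}$ term is unnecessary---the duality bound you derive for $k>3$ works verbatim for $k=3$ as well.
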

\begin{proof}
From $\nabla v_0=(\nabla v_0\cdot\bn_f)\bn_f+\bn_f\times(\nabla v_0\times\bn_f)$ and \eqref{decomposition}, we have
\begin{equation}\label{0:33}
\begin{split}
 &\sum_{T\in{\cal T}_h}\sum_{i=1}^dh_T^{-1}\|Q_n(\pa_iv_0)-v_{gi}\|_{\pa T}^2\\
=&\sum_{T\in{\cal T}_h}h_T^{-1}\|Q_n(\nabla v_0)-\pmb{v_{g}}\|_{\pa T}^2\\
=&\sum_{T\in{\cal T}_h}h_T^{-1}\|Q_n(\nabla v_0\cdot\bn_f)\bn_f+Q_n(\bn_f\times(\nabla v_0\times\bn_f))-(v_n\bn_f
 +\nabla_{w,\pmb{\tau}}v)\|_{\pa T}^2\\
\lesssim&\sum_{T\in{\cal T}_h}h_T^{-1}\|Q_n(\nabla v_0\cdot\bn_f)-v_n\|_{\pa T}^2
 +h_T^{-1}\|Q_n(\bn_f\times(\nabla v_0\times\bn_f))-\nabla_{w,\pmb{\tau}}v\|_{\pa T}^2\\
\lesssim&\3barv\3bar^2+\sum_{\F\in{ \F}_h}h_T^{-1}\|Q_n(\bn_f\times(\nabla v_0\times\bn_f))-\nabla_{w,\pmb{\tau}}v\|_\F^2.
\end{split}
\end{equation}
Next, from \eqref{discrete weak gradient-1} and the Stokes Theorem we have
 \begin{equation*}
\begin{split}
&|\langle Q_n(\bn_f\times(\nabla v_0\times\bn_f))-\nabla_{w,\pmb{\tau}}v,\pmb{\psi}\times\bn_f\rangle_\F| \\
 = \ & |\langle Q_fv_0-v_{b,f},(\nabla\times\pmb{\psi})\cdot\bn_f\rangle_{\F}+\langle v_{b,e}-Q_bv_0,\pmb{\psi}\cdot\pmb{\tau}\rangle_{\pa \F}|\\
 \leq \ & \|Q_fv_0-v_{b,f}\|_{\F}\|\nabla\times\pmb{\psi}\|_{\F}
 +\|v_{b,e}-Q_bv_0\|_{\pa \F}\|\pmb{\psi}\|_{\pa \F}\\
 \lesssim\ & \|Q_fv_0-v_{b,f}\|_{\F}h_T^{-1}\|\pmb{\psi}\|_{\F}
 +\|v_{b,e}-Q_bv_0\|_{\pa\F}h_T^{-\frac{1}{2}}\|\pmb{\psi}\|_{\F}
\end{split}
\end{equation*}
for all $\pmb{\psi}\in \W_{k-2}(\F)$. Hence,
$$
\|Q_n(\bn_f\times(\nabla v_0\times\bn_f))-\nabla_{w,\pmb{\tau}}v\|_\F\lesssim h_T^{-1}\|Q_fv_0-v_{b,f}\|_{\F}+h_T^{-\frac{1}{2}}\|v_{b,e}-Q_bv_0\|_{\pa \F}.
$$
Substituting the above estimate into \eqref{0:33} gives rise to the desired inequality \eqref{EQ:June19:001}.
\end{proof}

\begin{lemma}\label{PROJECTION-ESTIMATE-6}
For any $v\in V_h$, there yields
\begin{equation}\label{EQ:June19:002}
  \sum_{T\in{\cal T}_h}|v_0|_{2,T}^2\lesssim\3barv\3bar^2.
\end{equation}

\end{lemma}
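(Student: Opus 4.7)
The plan is to test the identity \eqref{error equation-13} against $\varphi = \partial_{ij}^2 v_0$, which is admissible since $v_0 \in P_k(T)$ implies $\partial_{ij}^2 v_0 \in P_{k-2}(T)$. Rearranging, this gives the identity
\begin{equation*}
\|\partial_{ij}^2 v_0\|_T^2 = (\partial_{ij,w}^2 v, \partial_{ij}^2 v_0)_T - \langle (v_0 - v_{b,f}) n_i, \partial_j(\partial_{ij}^2 v_0)\rangle_{\partial T} + \langle \partial_i v_0 - v_{gi}, (\partial_{ij}^2 v_0) n_j\rangle_{\partial T},
\end{equation*}
which after summation over $i,j$ and $T$ expresses $\sum_T |v_0|_{2,T}^2$ as three terms to be controlled by $\3bar v\3bar \cdot (\sum_T |v_0|_{2,T}^2)^{1/2}$, whence the result follows by dividing.

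The first term is immediate by Cauchy--Schwarz and the definition of $\3bar\cdot\3bar$. The key observation for the second term is that $\partial_j(\partial_{ij}^2 v_0)|_{\F}\in P_{k-3}(\F)$, so the $L^2$-orthogonality of $Q_f$ (which projects onto $P_{k-3}(\F)$) allows us to replace $v_0 - v_{b,f}$ by $Q_f v_0 - v_{b,f}$ in the boundary integral. Applying Cauchy--Schwarz, then the inverse inequality \eqref{inverse inequality} to the polynomial $\partial_j(\partial_{ij}^2 v_0)$, produces the factor $h_T^{-3/2}\|Q_f v_0 - v_{b,f}\|_{\partial T}$, which is controlled in $\ell^2$ by the $h_T^{-3}$-weighted face term in the stabilizer $s(v,v)$. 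An analogous strategy handles the third term: since $(\partial_{ij}^2 v_0) n_j|_{\F} \in P_{k-2}(\F)$ and $v_{gi} = v_n (n_f)_i + (\nabla_{w,\pmb{\tau}} v)_i \in P_{k-2}(\F)$ by construction, inserting the projection $Q_n$ yields $\langle Q_n(\partial_i v_0) - v_{gi}, (\partial_{ij}^2 v_0) n_j\rangle_{\partial T}$, and a trace/inverse bound followed by Lemma \ref{PROJECTION-ESTIMATE-68} provides the required estimate $\lesssim \3bar v\3bar \cdot (\sum_T |v_0|_{2,T}^2)^{1/2}$.

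The main subtlety, which is really the heart of the argument, is noticing the two polynomial-degree matches that make the $Q_f$ and $Q_n$ substitutions exact: the third-order derivative $\partial_j \partial_{ij}^2 v_0$ matches the degree of $v_{b,f}$, and the second-order derivative $\partial_{ij}^2 v_0$ matches that of $v_n$ and $\nabla_{w,\pmb{\tau}}v$. Without these matches, only the weaker cross-terms $\|v_0 - Q_f v_0\|_{\partial T}$ and $\|\partial_i v_0 - Q_n(\partial_i v_0)\|_{\partial T}$ would appear, which are not directly controlled by $\3bar v\3bar$. Everything else reduces to standard trace/inverse inequalities and Cauchy--Schwarz.
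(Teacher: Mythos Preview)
Your proposal is correct and follows essentially the same approach as the paper: test \eqref{error equation-13} with $\varphi=\partial_{ij}^2 v_0\in P_{k-2}(T)$, use the degree match $\partial_j\partial_{ij}^2 v_0|_{\F}\in P_{k-3}(\F)$ to replace $v_0-v_{b,f}$ by $Q_fv_0-v_{b,f}$, use $\partial_{ij}^2 v_0|_{\F}\in P_{k-2}(\F)$ to replace $\partial_i v_0$ by $Q_n(\partial_i v_0)$, and then combine Cauchy--Schwarz, the inverse/trace inequality \eqref{inverse inequality}, and Lemma~\ref{PROJECTION-ESTIMATE-68}. Your explicit identification of the polynomial-degree matches is exactly the mechanism the paper exploits (tacitly) in passing from $v_0-v_{b,f}$ to $Q_fv_0-v_{b,f}$ and from $\partial_i v_0$ to $Q_n(\partial_i v_0)$.
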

\begin{proof}
By taking $\varphi=\pa_{ij}^2v_0\in P_{k-2}(T)$ in \eqref{error equation-13} we have
\begin{equation*}
\begin{split}
 &(\pa_{ij,w}^2v,\pa_{ij}^2v_0)_T\\
=&(\pa_{ij}^2v_0,\pa_{ij}^2v_0)_T+\langle(v_0-v_{b,f})n_i,\pa_j(\pa_{ij}^2v_0)\rangle_{\pa T}
  -\langle\pa_iv_0-v_{gi},\pa_{ij}^2v_0 n_j\rangle_{\pa T}\\
=&(\pa_{ij}^2v_0,\pa_{ij}^2v_0)_T+\langle(Q_fv_0-v_{b,f})n_i,\pa_j(\pa_{ij}^2v_0)\rangle_{\pa T}
  -\langle Q_n(\pa_iv_0)-v_{gi},\pa_{ij}^2v_0 n_j\rangle_{\pa T}.
\end{split}
\end{equation*}
Hence,
\begin{equation*}
\begin{split}
\sum_{T\in{\cal T}_h}|v_0|_{2,T}^2
\lesssim&\Big(\sum_{T\in{\cal T}_h}\sum_{i,j=1}^d\|\pa_{ij,w}^2v\|_{T}^2\Big)^{\frac{1}{2}}
 \Big(\sum_{T\in{\cal T}_h}\sum_{i,j=1}^d\|\pa_{ij}^2v_0\|_{T}^2\Big)^{\frac{1}{2}}\\
&+\Big(\sum_{T\in{\cal T}_h}h_T^{-3}\|Q_fv_0-v_{b,f}\|_{\pa T}^2\Big)^{\frac{1}{2}}
 \Big(\sum_{T\in{\cal T}_h}\sum_{i,j=1}^dh_T^3\|\pa_j(\pa_{ij}^2v_0)\|_{\pa T}^2\Big)^{\frac{1}{2}}\\
&+\Big(\sum_{T\in{\cal T}_h}\sum_{i=1}^dh_T^{-1}\|Q_n(\pa_iv_0)-v_{gi}\|_{\pa T}^2\Big)^{\frac{1}{2}}
 \Big(\sum_{T\in{\cal T}_h}\sum_{i,j=1}^dh_T\|\pa_{ij}^2v_0\|_{\pa T}^2\Big)^{\frac{1}{2}}\\
\lesssim&\3barv\3bar\Big(\sum_{T\in{\cal T}_h}|v_0|_{2,T}^2\Big)^{\frac{1}{2}}.
\end{split}
\end{equation*}
This completes the proof of the lemma.
\end{proof}

\begin{lemma}\label{PROJECTION-ESTIMATE-7}
Let $k\ge 3$. For any $v\in V_h$ and $\varphi\in H^{k+1}(\O)$, there holds
\begin{eqnarray}
    |\sum_{T\in{\cal T}_h}\sum_{i,j=1}^d\langle v_0-Q_fv_0,\pa_j(\mathbb{Q}_h(\pa^2_{ij}\varphi)-\pa^2_{ij}\varphi) n_i\rangle_{\pa T}| &\lesssim& h^{k-1}\|\varphi\|_{k+1}\3barv\3bar,\label{esti1}\\
    \label{esti2}
\Big(\sum_{T\in{\cal T}_h}h_T^{-1}\|Q_n(D_{\pmb{\tau}}Q_0\varphi)-\nabla_{w,\pmb{\tau}}Q_h\varphi\|_{\pa T}^2\Big)^{\frac{1}{2}}&\lesssim& h^{k-1}\|\varphi\|_{k+1}.
\end{eqnarray}
\end{lemma}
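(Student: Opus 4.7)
The plan is to prove the two estimates separately. Both rely on Cauchy--Schwarz, the trace inequality \eqref{trace inequality}, and the projection error bounds from Lemma \ref{error projection}; the first estimate additionally requires a Bramble--Hilbert polynomial approximation combined with the a priori bound from Lemma \ref{PROJECTION-ESTIMATE-6}.

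For \eqref{esti1}, I apply Cauchy--Schwarz with weights $h_T^{\pm 3/2}$,
\[
\Big|\sum_{T,i,j}\langle v_0-Q_fv_0,\pa_j(\mathbb{Q}_h\pa_{ij}^2\varphi-\pa_{ij}^2\varphi)n_i\rangle_{\pa T}\Big|\le I_v^{1/2}\,I_\varphi^{1/2},
\]
with $I_v=\sum_T h_T^{-3}\|v_0-Q_fv_0\|_{\pa T}^2$ and $I_\varphi=\sum_T h_T^{3}\sum_{i,j}\|\pa_j(\mathbb{Q}_h\pa_{ij}^2\varphi-\pa_{ij}^2\varphi)\|_{\pa T}^2$. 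The factor $I_\varphi$ is handled elementwise by the trace inequality, which bounds $h_T^3\|\pa_j(\cdot)\|_{\pa T}^2$ by a combination of $h_T^2\|\cdot\|_{1,T}^2$ and $h_T^4\|\cdot\|_{2,T}^2$; invoking \eqref{qh} with $s=1$ and $s=2$ then yields $I_\varphi\lesssim h^{2(k-1)}\|\varphi\|_{k+1}^2$. For the factor $I_v$, the key point is that $Q_fv_0$ is the $L^2(\F)$-best approximation of $v_0$ in $P_{k-3}(\F)$: choosing any Bramble--Hilbert polynomial $p\in P_{k-3}(T)$ (whose restriction to each planar face lies in $P_{k-3}(\F)$) and applying the trace inequality gives $\|v_0-Q_fv_0\|_\F\le\|v_0-p\|_\F\lesssim h_T^{k-5/2}|v_0|_{k-2,T}$. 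For $k\ge 4$, the inverse inequality $|v_0|_{k-2,T}\lesssim h_T^{-(k-4)}|v_0|_{2,T}$ converts this into $I_v\lesssim \sum_T|v_0|_{2,T}^2\lesssim\3barv\3bar^2$ by Lemma \ref{PROJECTION-ESTIMATE-6}. The borderline case $k=3$ is the subtlest point, since $|v_0|_{k-2,T}=|v_0|_{1,T}$ is not controlled by $|v_0|_{2,T}$ alone; closing it requires the extra stabilizer term $\delta_{k,3}h_T^{-1}\|Q_nD_{\pmb{\tau}}v_0-\nabla_{w,\pmb{\tau}}v\|_{\pa T}^2$ to supply the missing control on the tangential piece of $\nabla v_0$.

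For \eqref{esti2}, the commutative identity \eqref{comu2} gives $\nabla_{w,\pmb{\tau}}Q_h\varphi=Q_n(\bn_f\times(\nabla\varphi\times\bn_f))=Q_n(D_{\pmb{\tau}}\varphi)$, so
\[
Q_n(D_{\pmb{\tau}}Q_0\varphi)-\nabla_{w,\pmb{\tau}}Q_h\varphi=Q_n\bigl(D_{\pmb{\tau}}(Q_0\varphi-\varphi)\bigr).
\]
Because $Q_n$ is a contraction in $L^2(\F)$ and $D_{\pmb{\tau}}$ is a tangential projection of the Euclidean gradient, $\|Q_n(D_{\pmb{\tau}}(Q_0\varphi-\varphi))\|_\F\le\|\nabla(Q_0\varphi-\varphi)\|_\F$. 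The trace inequality \eqref{trace inequality} followed by the projection estimate \eqref{q0} with $s=1,2$ then delivers the desired bound $\lesssim h^{k-1}\|\varphi\|_{k+1}$ after summation. The principal obstacle of the whole lemma is thus the factor $I_v$ in \eqref{esti1}: it rests squarely on Lemma \ref{PROJECTION-ESTIMATE-6} together with an inverse inequality, with the $k=3$ case being the delicate technical step that must be closed using the $\delta_{k,3}$ stabilizer.
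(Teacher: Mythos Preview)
Your treatment of \eqref{esti2} is correct and matches the paper's proof exactly. For \eqref{esti1} with $k\ge 4$, your element-by-element bound on $I_v$ via Bramble--Hilbert and the inverse inequality $|v_0|_{k-2,T}\lesssim h_T^{-(k-4)}|v_0|_{2,T}$ is a valid alternative to the paper's route, which instead first drops the $\mathbb{Q}_h$-part (since $\pa_j\mathbb{Q}_h(\pa^2_{ij}\varphi)\in P_{k-3}(T)$ is orthogonal to $v_0-Q_fv_0$ on each face) and then rewrites the sum over $\pa T$ as a sum over faces involving the jump $[v_0]$.

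The gap is at $k=3$. Your direct Cauchy--Schwarz yields $I_v\lesssim\sum_T h_T^{-1}\|D_{\pmb\tau}v_0\|_{\pa T}^2$, and this quantity is \emph{not} controlled by $\3barv\3bar^2$: the $\delta_{k,3}$ stabilizer bounds only the \emph{difference} $Q_nD_{\pmb\tau}v_0-\nabla_{w,\pmb\tau}v$, while the residual piece $\|\nabla_{w,\pmb\tau}v\|_\F$ carries no energy-norm bound. The paper's essential idea---absent from your sketch---is the jump rewriting $\sum_T\langle v_0-Q_fv_0,\cdot\rangle_{\pa T}=\sum_{\F}\langle[v_0]-Q_f[v_0],\cdot\rangle_\F$, after which a face-Poincar\'e inequality produces $\|[D_{\pmb\tau}v_0]\|_\F$. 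One then splits $[D_{\pmb\tau}v_0]$ into $(I-Q_n)[D_{\pmb\tau}v_0]$ (bounded by $h|[D_{\pmb\tau\pmb\tau}v_0]|_\F$ and hence by $|v_0|_{2,T}$ via trace) and $Q_n[D_{\pmb\tau}v_0]$. The crucial observation is that $\nabla_{w,\pmb\tau}v$ is single-valued on interior faces, so $[\nabla_{w,\pmb\tau}v]=0$ and therefore
\[
\|Q_n[D_{\pmb\tau}v_0]\|_\F=\|[\,Q_nD_{\pmb\tau}v_0-\nabla_{w,\pmb\tau}v\,]\|_\F,
\]
which \emph{is} controlled by the $\delta_{k,3}$ stabilizer. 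Without passing to jumps there is no mechanism to cancel the uncontrolled $\nabla_{w,\pmb\tau}v$ contribution, so the $k=3$ case cannot be closed along the lines you propose.
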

\begin{proof}
We first note the following identity
\begin{equation*}
\begin{split}
J:=&\sum_{T\in{\cal T}_h}\sum_{i,j=1}^d\langle v_0-Q_fv_0,{\pa_j(\pa^2_{ij}\varphi-\mathbb{Q}_h(\pa^2_{ij}\varphi))n_i}\rangle_{\pa T}\\
=&\sum_{T\in{\cal T}_h}\sum_{i,j=1}^d\langle v_0-Q_fv_0,\pa_j\pa^2_{ij}\varphi n_i\rangle_{\pa T}\\
=&\sum_{T\in{\cal T}_h}\sum_{i,j=1}^d\langle v_0-Q_fv_0,(I-Q_f)\pa_j\pa^2_{ij}\varphi n_i\rangle_{\pa T}\\
=&\sum_{\F\in{\F}_h}\sum_{i,j=1}^d\langle[v_0]-Q_f[v_0],(I-Q_f)\pa_j\pa^2_{ij}\varphi n_{i}\rangle_{\F}.
\end{split}
\end{equation*}
For $k>3$, the finite element space on face $\F$ consists of {linear functions} so that
$$
|\langle[v_0]-Q_f[v_0],(I-Q_f)\pa_j\pa^2_{ij}\varphi n_{i}\rangle_{\F}|\leq Ch^2\|[v_0]\|_{2,\F} \|(I-Q_f)\pa_j\pa^2_{ij}\varphi\|_{0,\F},
$$
which can be used to derive the desired inequality \eqref{esti1} without any difficulty. Here $[v_0]=v_0|_{T_L\cap\F}-v_0|_{T_R\cap\F}$ is the jump of $v_0$ on the face $\F$ shared by two adjacent elements $T_L$ and $T_R$.

For the case of $k=3$, the finite element space on face $\F$ consists of constants only so that
$$
|\langle[v_0]-Q_f[v_0],(I-Q_f)\pa_j\pa^2_{ij}\varphi n_{i}\rangle_{\F}|\leq Ch\|[D_{\pmb{\tau}}v_0]\|_{0,\F} \|(I-Q_f)\pa_j\pa^2_{ij}\varphi\|_{0,\F},
$$
where {$D_{\pmb{\tau}}v_0$ stands} for the tangential derivative on $\F$. It follows from the trace inequalities \eqref{trace inequality}-\eqref{inverse inequality} and the inverse inequality that
\begin{equation*}
\begin{split}
|J|\lesssim&\Big(\sum_{\F\in{\F}_h}h_T^2\|[D_{\pmb{\tau}}v_0]\|_{\F}^2\Big)^{\frac{1}{2}}\\
&\cdot\Big(\sum_{T\in{\cal T}_h}\sum_{i,j=1}^dh_T^{-1}\|(I-Q_f)\pa_j\pa^2_{ij}\varphi\|_{T}^2+h_T|(I-Q_f)\pa_j\pa^2_{ij}\varphi|_{1,T}^2\Big)^{\frac{1}{2}}\\
\lesssim&\Big(\sum_{\F\in{\F}_h}h_T^2\|[D_{\pmb{\tau}}v_0]-Q_n([D_{\pmb{\tau}}v_0])\|_{\F}^2+h_T^2\|Q_n([D_{\pmb{\tau}}v_0])\|_{\F}^2\Big)^{\frac{1}{2}}\\
&\cdot\Big(\sum_{T\in{\cal T}_h}h_T^{2k-5}\|\varphi\|_{k+1,T}^2\Big)^{\frac{1}{2}}\\
\lesssim&\Big(\sum_{\F\in{\F}_h}h_T^4\|[D_{\pmb{\tau\tau}}v_0]\|_{\F}^2+h_T^2\|Q_n([D_{\pmb{\tau}}v_0])-[
\nabla_{w,\pmb{\tau}}v]\|_{\F}^2\Big)^{\frac{1}{2}}h^{k-{\frac{5}{2}}}\|\varphi\|_{k+1}\\
\lesssim&\Big(\sum_{T\in{\cal T}_h}h_T^3|v_0|_{2,T}^2+\sum_{T\in{\cal T}_h}h_T^2\|Q_nD_{\pmb{\tau}}v_0-\nabla_{w,\pmb{\tau}}v\|_{\pa T}^2\Big)^{\frac{1}{2}}
h^{k-{\frac{5}{2}}}\|\varphi\|_{k+1}\\
\lesssim&\Big(\sum_{T\in{\cal T}_h}h_T^3|v_0|_{2,T}^2+h^3\3barv\3bar^2\Big)^{\frac{1}{2}}h^{k-{\frac{5}{2}}}\|\varphi\|_{k+1}\\
\lesssim&\ h^{k-1}\|\varphi\|_{k+1}\3barv\3bar,
\end{split}
\end{equation*}
which completes the proof of \eqref{esti1}.

To verify \eqref{esti2}, we recall that $Q_nD_{\pmb{\tau}}w_0=Q_n(\bn_{f}\times(\nabla w_0\times\bn_{f}))$. Hence, from \eqref{comu2}, the trace inequality \eqref{trace inequality}, and \eqref{q0} we arrive at
\begin{equation*}
\begin{split}
&\sum_{T\in{\cal T}_h}h_T^{-1}\|Q_n(D_{\pmb{\tau}}Q_0\varphi)-\nabla_{w,\pmb{\tau}}Q_h\varphi\|_{\pa T}^2\\
=&\sum_{T\in{\cal T}_h}h_T^{-1}\|Q_n(\bn_{f}\times(\nabla Q_0\varphi\times\bn_{f}))-Q_n(\bn_{f}\times(\nabla\varphi\times\bn_{f}))\|_{\pa T}^2\\
\lesssim&\sum_{T\in{\cal T}_h}h_T^{-1}\|\bn_{f}\times(\nabla Q_0\varphi\times\bn_{\F})-\bn_{f}\times(\nabla\varphi\times\bn_{f})\|_{\pa T}^2\\
\lesssim&\sum_{T\in{\cal T}_h}h_T^{-1}\|\nabla Q_0\varphi-\nabla\varphi\|_{\pa T}^2\\
\lesssim&\ h^{k-1}\|\varphi\|_{k+1}.
\end{split}
\end{equation*}
 This completes the proof of the lemma.
 \end{proof}

\section{Error estimates}\label{Section:EE}
The following is an error estimate for the numerical scheme \eqref{WG-scheme} with respect to the natural ``energy" norm.

\begin{theorem}\label{THM:energy-estimate}
Let $u$ be the exact solution of the equation \eqref{model-problem}  and $u_h\in V_h$ be its numerical approximation arising from the WG scheme \eqref{WG-scheme}.  Under the assumtion of $u\in H^{k+1}(\O)$, the following error estimate holds true:
\begin{equation}\label{tribarerror}
\3bare_h\3bar\lesssim h^{k-1}\|u\|_{k+1}.
\end{equation}
\end{theorem}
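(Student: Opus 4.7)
The plan is to start from the error equation of Lemma \ref{error equation}, set $v=e_h$, and control the three groups of terms in $\zeta_u(v)$ by the triple-bar norm times $h^{k-1}\|u\|_{k+1}$. Since $\3bar\cdot\3bar$ is a norm on $V_h^0$ by Lemma \ref{tri-norm}, this yields \eqref{tribarerror} after dividing through by $\3bar e_h\3bar$. So the whole exercise reduces to bounding
\[
|\zeta_u(v)| \ \lesssim \ h^{k-1}\|u\|_{k+1}\,\3bar v\3bar,\qquad \forall v\in V_h^0.
\]

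\textbf{Stabilizer term $s(Q_h u,v)$.} By Cauchy--Schwarz on each of the four pieces of $s(\cdot,\cdot)$ one factors off $\bigl(s(v,v)\bigr)^{1/2}\le \3bar v\3bar$, leaving a factor that only depends on $u$. For the first three pieces I would write $Q_b Q_0 u - Q_b u = Q_b(Q_0 u - u)$ (and similarly for $Q_f$, $Q_n$), apply the $L^2$ stability of the projections, then invoke the trace inequality \eqref{trace inequality} together with \eqref{q0} (and its gradient analogue) to get the $h^{k-1}\|u\|_{k+1}$ factor. The fourth piece is present only when $k=3$; it is exactly the quantity estimated in \eqref{esti2} of Lemma \ref{PROJECTION-ESTIMATE-7}, which gives the same order.

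\textbf{Third term of $\zeta_u$ (normal-trace type).} Using $v_{gi}$'s definition and the fact that $\mathbb{Q}_h(\partial^2_{ij}u)-\partial^2_{ij}u$ is $L^2$-orthogonal to polynomials of degree $\le k-2$ on $T$, I would first insert $Q_n$ to write
\[
\langle \partial_i v_0 - v_{gi},(\partial^2_{ij}u-\mathbb{Q}_h(\partial^2_{ij}u))n_j\rangle_{\partial T}
=\langle Q_n(\partial_i v_0)-v_{gi},(\partial^2_{ij}u-\mathbb{Q}_h(\partial^2_{ij}u))n_j\rangle_{\partial T}.
\]
Cauchy--Schwarz then pairs $\bigl(\sum_T h_T^{-1}\|Q_n(\partial_i v_0)-v_{gi}\|_{\partial T}^2\bigr)^{1/2}\lesssim\3bar v\3bar$, provided by Lemma \ref{PROJECTION-ESTIMATE-68}, against $\bigl(\sum_T h_T\|\partial^2_{ij}u-\mathbb{Q}_h(\partial^2_{ij}u)\|_{\partial T}^2\bigr)^{1/2}$, which the trace inequality \eqref{trace inequality} and \eqref{qh} bound by $h^{k-1}\|u\|_{k+1}$.

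\textbf{Second term of $\zeta_u$ (the hard one).} This is the obstacle: it pairs the low-regularity face jump $v_0-v_{b,f}$ against a \emph{third} derivative of $u$. The key is that \eqref{esti1} of Lemma \ref{PROJECTION-ESTIMATE-7} already delivers exactly this estimate, so I would cite it directly; note however that to \emph{apply} that lemma in the hard case $k=3$, the proof of \eqref{esti1} itself uses the $\delta_{k,3}$ piece of the stabilizer together with the $H^2$-control $\sum_T|v_0|^2_{2,T}\lesssim \3bar v\3bar^2$ from Lemma \ref{PROJECTION-ESTIMATE-6} to absorb a tangential derivative jump of $v_0$. Once \eqref{esti1} is invoked, this term is bounded by $h^{k-1}\|u\|_{k+1}\3bar v\3bar$ as required.

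Combining the three bounds with $v=e_h$ in the error equation gives $\3bar e_h\3bar^2\lesssim h^{k-1}\|u\|_{k+1}\3bar e_h\3bar$, whence the theorem. The only place where extra thought is needed is the $k=3$ version of the second term, and for that the paper has already isolated the delicate step as Lemma \ref{PROJECTION-ESTIMATE-7}; the rest is a bookkeeping exercise using the trace inequality, the projection estimates of Lemma \ref{error projection}, and Lemmas \ref{PROJECTION-ESTIMATE-68}--\ref{PROJECTION-ESTIMATE-6}.
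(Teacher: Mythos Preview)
Your overall strategy matches the paper's proof, and your handling of the stabilizer term and your invocation of Lemma \ref{PROJECTION-ESTIMATE-7} for the ``hard'' second term are both on target (though note that \eqref{esti1} is stated for $v_0-Q_fv_0$, not $v_0-v_{b,f}$; you still need the easy split $v_0-v_{b,f}=(v_0-Q_fv_0)+(Q_fv_0-v_{b,f})$, the second piece being controlled directly by the stabilizer norm, exactly as the paper does).

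There is, however, a genuine gap in your treatment of the third term. The displayed identity
\[
\langle \partial_i v_0 - v_{gi},(\partial^2_{ij}u-\mathbb{Q}_h(\partial^2_{ij}u))n_j\rangle_{\partial T}
=\langle Q_n(\partial_i v_0)-v_{gi},(\partial^2_{ij}u-\mathbb{Q}_h(\partial^2_{ij}u))n_j\rangle_{\partial T}
\]
is \emph{false}. The orthogonality you invoke, $\partial^2_{ij}u-\mathbb{Q}_h(\partial^2_{ij}u)\perp P_{k-2}(T)$, is a \emph{volume} statement on $T$ and says nothing about traces on $\partial T$; and on each face $\F$ the factor $(\partial^2_{ij}u-\mathbb{Q}_h(\partial^2_{ij}u))n_j$ is not a polynomial (since $\partial^2_{ij}u$ is merely in $H^{k-1}$), so the face-orthogonality of $\partial_iv_0-Q_n(\partial_iv_0)$ to $P_{k-2}(\F)$ does not kill the cross term either. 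The paper does \emph{not} insert $Q_n$ for free: it writes $\partial_i v_0-v_{gi}=(\partial_iv_0-Q_n\partial_iv_0)+(Q_n\partial_iv_0-v_{gi})$ and bounds both pieces. The second piece is handled by Lemma \ref{PROJECTION-ESTIMATE-68} as you say, but the first piece requires the extra estimate $\sum_T h_T^{-1}\|\partial_iv_0-Q_n(\partial_iv_0)\|_{\partial T}^2\lesssim \sum_T|v_0|_{2,T}^2$ (approximation property of $Q_n$ plus the trace/inverse inequalities), followed by Lemma \ref{PROJECTION-ESTIMATE-6} to get back to $\3bar v\3bar^2$. Without this additional ingredient your bound for the third term does not close.
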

\begin{proof}
By taking $v=e_h\in V_h^0$ in \eqref{Error-equation} we have
\begin{equation}\label{energy-estimate-2}
\begin{split}
\3bare_h\3bar^2
=&\zeta_u(e_h)\\
=&s(Q_hu,e_h)+\sum_{T\in{\cal T}_h}\sum_{i,j=1}^d\langle e_0-e_{b,f},\pa_j(\mathbb{Q}_h(\pa^2_{ij}u)-\pa^2_{ij}u) n_i\rangle_{\pa T}\\
&+\sum_{T\in{\cal T}_h}\sum_{i,j=1}^d\langle \pa_ie_0-e_{gi},(\pa^2_{ij}u-\mathbb{Q}_h(\pa^2_{ij}u)) n_j\rangle_{\pa T}\\
=&I_1+I_2+I_3.
\end{split}
\end{equation}

For $I_1$, we have from the Cauchy-Schwarz inequality that
\begin{equation*}%\label{energy-estimate-3}
\begin{split}
 |I_1|=&\ |s(Q_hu,e_h)|\\
\leq&\sum_{T\in{\cal T}_h}h_T^{-2}|\langle Q_b(Q_0u)-Q_bu,Q_be_0-e_{b,e}\rangle_{\pa \F}|\\
  &+\sum_{T\in{\cal T}_h}h_T^{-3} |\langle Q_f(Q_0u)-Q_fu,Q_fe_0-e_{b,f}\rangle_{\pa T}|\\
  &+\sum_{T\in{\cal T}_h}h_T^{-1}|\langle Q_n(\nabla Q_0u)\cdot\bn_f-Q_n(\nabla u\cdot\bn_f),Q_n(\nabla e_0)\cdot\bn_f-e_n\rangle_{\pa T}|\\
  &+\delta_{k,3}\sum_{T\in{\cal T}_h}h_T^{-1}|\langle Q_nD_{\pmb{\tau}}Q_0u-\nabla_{w,\pmb{\tau}}Q_hu,Q_nD_{\pmb{\tau}}e_0-\nabla_{w,\pmb{\tau}}e_h\rangle_{\pa T}|\\
{ \lesssim}&\Big(\sum_{T\in{\cal T}_h}h_T^{-2}\|Q_0u-u\|_{\pa \F}^2\Big)^{\frac{1}{2}}
     \Big(\sum_{T\in{\cal T}_h}h_T^{-2}\|Q_be_0-e_{b,e}\|_{\pa \F}^2\Big)^{\frac{1}{2}}\\
&+\Big(\sum_{T\in{\cal T}_h}h_T^{-3}\|Q_0u-u\|_{\pa T}^2\Big)^{\frac{1}{2}}
     \Big(\sum_{T\in{\cal T}_h}h_T^{-3}\|Q_fe_0-e_{b,f}\|_{\pa T}^2\Big)^{\frac{1}{2}}\\
&+\Big(\sum_{T\in{\cal T}_h}h_T^{-1}\|\nabla Q_0u-\nabla u\|_{\pa T}^2\Big)^{\frac{1}{2}}
    \Big(\sum_{T\in {\cal T}_h}h_T^{-1}\|Q_n(\nabla e_0)\cdot\bn_f-e_n\|_{\pa T}^2\Big)^{\frac{1}{2}}\\
&+\delta_{k,3}\Big(\sum_{T\in{\cal T}_h}h_T^{-1}\|Q_nD_{\pmb{\tau}}Q_0u-\nabla_{w,\pmb{\tau}}Q_hu\|_{\pa T}^2\Big)^{\frac{1}{2}}\3bare_h\3bar.
\end{split}
\end{equation*}
Next, using the trace inequality \eqref{trace inequality} and the estimates \eqref{q0} and \eqref{esti2}, we arrive at
\begin{equation}\label{energy-estimate-3}
\begin{split}
|I_1|\lesssim&\Big(\sum_{T\in{\cal T}_h}h_T^{-3}\|Q_0u-u\|_{\F}^2+h_T^{-1}\|\nabla Q_0u-\nabla u\|_{\F}^2\Big)^{\frac{1}{2}}\3bare_h\3bar\\
&+\Big(\sum_{T\in{\cal T}_h}h_T^{-4}\|Q_0u-u\|_{T}^2+h_T^{-2}\|\nabla Q_0u-\nabla u\|_{T}^2\Big)^{\frac{1}{2}}\3bare_h\3bar\\
&+\Big(\sum_{T\in{\cal T}_h}h_T^{-2}\|\nabla Q_0u-\nabla u\|_{ T}^2+|Q_0u-u|_{2,T}^2\Big)^{\frac{1}{2}}\3bare_h\3bar\\
&+\delta_{k,3}h^2\|u\|_4\3bare_h\3bar\\
\lesssim&\Big(\sum_{T\in{\cal T}_h}h_T^{-4}\|Q_0u-u\|_{T}^2+h_T^{-2}\|\nabla Q_0u-\nabla u\|_{T}^2\Big)^{\frac{1}{2}}\3bare_h\3bar\\
&+\Big(\sum_{T\in{\cal T}_h}h_T^{-2}\|\nabla Q_0u-\nabla u\|_{T}^2+|Q_0u-u|_{2,T}^2\Big)^{\frac{1}{2}}\3bare_h\3bar\\
&+\delta_{k,3}h^2\|u\|_4\3bare_h\3bar\\
\lesssim&\ h^{k-1}\|u\|_{k+1}\3bare_h\3bar+\delta_{k,3}h^2\|u\|_4\3bare_h\3bar\\
\lesssim&\ h^{k-1}\|u\|_{k+1}\3bare_h\3bar.
\end{split}
\end{equation}

For the term $I_2$, we have from  \eqref{esti1}, the Cauchy-Schwarz inequality, and the trace inequality \eqref{trace inequality} that
\begin{equation}\label{energy-estimate-51}
\begin{split}
|I_2|=&|\sum_{T\in{\cal T}_h}\sum_{i,j=1}^d\langle e_0-e_{b,f},\pa_j(\mathbb{Q}_h(\pa^2_{ij}u)-\pa^2_{ij}u)n_i\rangle_{\pa T}|\\
=&|\sum_{T\in{\cal T}_h}\sum_{i,j=1}^d\langle e_0-Q_fe_{0},\pa_j(\mathbb{Q}_h(\pa^2_{ij}u)-\pa^2_{ij}u)n_i\rangle_{\pa T}\\
&+\sum_{T\in{\cal T}_h}\sum_{i,j=1}^d\langle Q_fe_{0}-e_{b,f},\pa_j(\mathbb{Q}_h(\pa^2_{ij}u)-\pa^2_{ij}u)n_i\rangle_{\pa T}|\\
\lesssim&\ h^{k-1}\|u\|_{k+1}\3bar e_h\3bar+\Big(\sum_{T\in{\cal T}_h}h_T^{-3}\|Q_fe_0-e_{b,f}\|_{\pa T}^2\Big)^{\frac{1}{2}} \\
&\cdot\Big(\sum_{T\in{\cal T}_h}\sum_{i,j=1}^dh_T^3\|\pa_j(\mathbb{Q}_h(\pa^2_{ij}u)-\pa^2_{ij}u)\|_{\pa T}^2\Big)^{\frac{1}{2}}\\
\lesssim&\ h^{k-1}\|u\|_{k+1}\3bar e_h\3bar.
\end{split}
\end{equation}

As to $I_3$, we have from the Cauchy-Schwarz inequality, Lemmas \ref{PROJECTION-ESTIMATE-68}-\ref{PROJECTION-ESTIMATE-6},  the trace inequality \eqref{trace inequality},  and \eqref{qh}  that
\begin{equation}\label{energy-estimate-4}
\begin{split}
|I_3|=&|\sum_{T\in{\cal T}_h}\sum_{i,j=1}^d\langle \pa_ie_0-e_{gi},(\pa^2_{ij}u-\mathbb{Q}_h(\pa^2_{ij}u)) n_j\rangle_{\pa T}|\\
\lesssim&\Big(\sum_{T\in{\cal T}_h}\sum_{i=1}^dh_T^{-1}\|Q_n(\pa_ie_0)-e_{gi}\|_{\pa T}^2
 +h_T^{-1}\|\pa_ie_0-Q_n(\pa_ie_0)\|_{\pa T}^2\Big)^{\frac{1}{2}}\\
&\cdot\Big(\sum_{T\in{\cal T}_h}\sum_{i,j=1}^dh_T\|\pa^2_{ij}u-\mathbb{Q}_h(\pa^2_{ij}u)\|_{\pa T}^2\Big)^{\frac{1}{2}}\\
\lesssim&\Big(\3bare_h\3bar^2+\sum_{T\in{\cal T}_h}|\pa_ie_0|_{1,T}^2\Big)^{\frac{1}{2}}\\
&\cdot\Big(\sum_{T\in{\cal T}_h}\sum_{i,j=1}^d\|\pa^2_{ij}u-\mathbb{Q}_h(\pa^2_{ij}u)\|_{T}^2
 +h_T^2\|\nabla(\pa^2_{ij}u-\mathbb{Q}_h(\pa^2_{ij}u))\|_{T}^2\Big)^{\frac{1}{2}}\\
\lesssim&\Big(\3bare_h\3bar^2+\sum_{T\in{\cal T}_h}|e_0|_{2,T}^2\Big)^{\frac{1}{2}}h^{k-1}\|u\|_{k+1}\\
\lesssim&\ h^{k-1}\|u\|_{k+1}\3bare_h\3bar.
\end{split}
\end{equation}
Substituting \eqref{energy-estimate-3}-\eqref{energy-estimate-4} into \eqref{energy-estimate-2} gives rise to \eqref{tribarerror}. This completes the proof of the theorem.
\end{proof}

To establish an optimal order error estimate for the numerical solution in the $L^2$ norm, we consider the dual problem that seeks $\Phi$ satisfying
\begin{equation}\label{dual-equation}
\begin{split}
\Delta^2\Phi&=e_0,\quad\mbox{in}~~\O,\\
        \Phi&=0,\ \quad\mbox{on}~~\pa\O,\\
\frac{\pa\Phi}{\pa\textbf{n}}&=0,\ \quad\mbox{on}~~\pa\O.
\end{split}
\end{equation}
Assume that the problem \eqref{dual-equation} has the $H^4$-regularity in the sense that there exists a constant $C$ such that
\begin{equation}\label{dual-regular}
\|\Phi\|_4\leq C\|e_0\|.
\end{equation}

\begin{theorem}\label{THM:L2-estimate-e0}
Let $u\in H^{k+1}(\O)$ be the exact solution of the problem \eqref{model-problem}  and $u_h\in V_h$ be its numerical solution arising from the WG scheme \eqref{WG-scheme}. Under the $H^4$-regularity assumption \eqref{dual-regular}, we have the following error estimate
$$
\|e_0\|\lesssim h^{k+1}\|u\|_{k+1}.
$$
\end{theorem}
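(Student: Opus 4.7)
The plan is to carry out a duality (Aubin--Nitsche) argument against the dual problem \eqref{dual-equation}, taking advantage of the $H^4$-regularity assumption \eqref{dual-regular}. A first observation is that $Q_h\Phi\in V_h^0$: since $\Phi|_{\pa\O}=0$ and $\nabla\Phi|_{\pa\O}=\mathbf 0$, the projected boundary data $Q_b\Phi$, $Q_f\Phi$ and $Q_n(\nabla\Phi\cdot\bn_f)$ all vanish on $\pa\O$, so $Q_h\Phi$ is an admissible test function in the error equation \eqref{Error-equation}.

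I would begin from $\|e_0\|^2=\sum_{T\in\T_h}(e_0,\Delta^2\Phi)_T$ and repeat, with $u$ replaced by $\Phi$ and $v$ replaced by $e_h$, the element-wise integration-by-parts chain \eqref{error equation-1}--\eqref{error equation-remainder-4} from the proof of Lemma \ref{error equation}. This produces the identity
$$
\|e_0\|^2=\sum_{T\in\T_h}\sum_{i,j=1}^d(\pa^2_{ij,w}e_h,\pa^2_{ij,w}Q_h\Phi)_T+\ell_\Phi(e_h),
$$
where $\ell_\Phi(e_h)$ consists of the same two boundary projection residuals appearing on the right of \eqref{error equation-remainder-4} but with $u$ replaced by $\Phi$ and $v$ replaced by $e_h$. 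Combining this with the error equation \eqref{Error-equation} applied to $v=Q_h\Phi$, together with the split $a_s=(\pa^2_w\cdot,\pa^2_w\cdot)+s(\cdot,\cdot)$, yields
$$
\|e_0\|^2=\zeta_u(Q_h\Phi)-s(e_h,Q_h\Phi)+\ell_\Phi(e_h),
$$
so the task reduces to bounding each of the three quantities on the right by $h^{k+1}\|u\|_{k+1}\|e_0\|$.

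For $\ell_\Phi(e_h)$, I would re-run the arguments used for $I_2$ and $I_3$ in the proof of Theorem \ref{THM:energy-estimate}, but with $\Phi\in H^4(\O)$ in place of $u\in H^{k+1}(\O)$ (in effect, the ``$k=3$'' specialization of those estimates), obtaining $|\ell_\Phi(e_h)|\lesssim h^{2}\|\Phi\|_4\,\3bar e_h\3bar$. Combining with the energy estimate $\3bar e_h\3bar\lesssim h^{k-1}\|u\|_{k+1}$ from Theorem \ref{THM:energy-estimate} and the regularity $\|\Phi\|_4\lesssim\|e_0\|$ produces the desired $h^{k+1}\|u\|_{k+1}\|e_0\|$. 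The stabilizer cross term $s(e_h,Q_h\Phi)$ is handled by Cauchy--Schwarz together with $s(Q_h\Phi,Q_h\Phi)^{1/2}\lesssim h^2\|\Phi\|_4$, which follows from exactly the projection/trace manipulations used in the estimate of $I_1$ in Theorem \ref{THM:energy-estimate}, specialized to the $H^4$-regularity of $\Phi$.

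The main obstacle, and the most technical step, is the estimate of $\zeta_u(Q_h\Phi)$ from \eqref{error equation-remainder}. The stabilizer piece $s(Q_hu,Q_h\Phi)$ is split by Cauchy--Schwarz and controlled by $h^{k-1}\|u\|_{k+1}\cdot h^2\|\Phi\|_4$. The two remaining boundary terms involve the quantities $(Q_h\Phi)_0-(Q_h\Phi)_{b,f}=Q_0\Phi-Q_f\Phi$ and $\pa_i(Q_h\Phi)_0-(Q_h\Phi)_{gi}=\pa_iQ_0\Phi-Q_n(\pa_i\Phi)$, where I use the commuting identity $(Q_h\Phi)_g=Q_n(\nabla\Phi)$ established in \eqref{decomposition-new}. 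In contrast with the energy-norm analysis, these factors are no longer paired against $\3bar e_h\3bar$ but against a factor depending on the smooth function $\Phi$, so standard approximation theory for $Q_0,Q_f,Q_n$ applied to $\Phi\in H^4(\O)$ supplies the extra power of $h$. Concretely, I would split $Q_0\Phi-Q_f\Phi=(Q_0-I)\Phi+(I-Q_f)\Phi$, estimate each piece on $\pa T$ via the trace inequality \eqref{trace inequality} and Bramble--Hilbert, and then apply a weighted Cauchy--Schwarz of the form $(\sum_Th_T^{-3}\|\cdot\|_{\pa T}^2)^{1/2}(\sum_Th_T^{3}\|\cdot\|_{\pa T}^2)^{1/2}$ balanced to match the factor $\|\pa_j(\mathbb{Q}_h\pa^2_{ij}u-\pa^2_{ij}u)\|_{\pa T}$ controlled through \eqref{qh} and \eqref{trace inequality}. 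A completely parallel weighted Cauchy--Schwarz handles the term containing $\pa_iQ_0\Phi-Q_n(\pa_i\Phi)$. The arithmetic of the exponents combines to $h^{k+1}\|u\|_{k+1}\|e_0\|$ in both cases; dividing through by $\|e_0\|$ then delivers the claimed optimal $L^2$ estimate.
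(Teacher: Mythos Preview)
Your duality framework is correct and coincides with the paper's: since $s$ is symmetric, your identity $\|e_0\|^2=\zeta_u(Q_h\Phi)-s(e_h,Q_h\Phi)+\ell_\Phi(e_h)$ is exactly the paper's $\|e_0\|^2=\zeta_u(Q_h\Phi)-\zeta_\Phi(e_h)$, and your handling of $\ell_\Phi(e_h)$, $s(e_h,Q_h\Phi)$ and $s(Q_hu,Q_h\Phi)$ is sound.

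The gap is in your estimate of the two remaining boundary terms in $\zeta_u(Q_h\Phi)$. For the term containing $Q_0\Phi-Q_f\Phi$, you propose to split as $(Q_0-I)\Phi+(I-Q_f)\Phi$ and bound each directly via Bramble--Hilbert and a weighted Cauchy--Schwarz. The $(Q_0-I)\Phi$ piece indeed yields $O(h^{7/2})\|\Phi\|_{4,T}$ on $\pa T$ and gives the right order. The $(I-Q_f)\Phi$ piece, however, cannot be controlled this way: $Q_f$ projects only onto $P_{k-3}(\F)$, so for $k=3$ one gains at most a single power of $h$ on the face, and no choice of weights in Cauchy--Schwarz recovers $h^{k+1}$ (you end up at best with $h^{k-2}\|\Phi\|_1\|u\|_{k+1}$). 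The paper eliminates this piece entirely by a cancellation argument: since $\pa_j\mathbb{Q}_h\pa_{ij}^2u\big|_\F\in P_{k-3}(\F)$, the orthogonality of $(I-Q_f)$ removes the $\mathbb{Q}_h$ contribution, leaving $\sum_T\langle(I-Q_f)\Phi,\pa_j\pa_{ij}^2u\,n_i\rangle_{\pa T}$, which vanishes because $(I-Q_f)\Phi$ is single-valued across interior faces (and zero on $\pa\O$) while $n_i$ flips sign. An entirely analogous orthogonality-plus-cancellation is needed for the $(I-Q_n)\pa_i\Phi$ portion of the term containing $\pa_iQ_0\Phi-Q_n(\pa_i\Phi)$; the paper uses it tacitly when it passes from $\pa_iQ_0\Phi-Q_n\pa_i\Phi$ to $\nabla Q_0\Phi-\nabla\Phi$ in the $J_3$ estimate. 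Without these cancellations your exponent arithmetic will fall short of the optimal $h^{k+1}$.
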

\begin{proof}
First, using \eqref{discrete weak gradient-1} with $e_{b,f}=0$ on each $\F\subset\pa\O$ and $e_{b,e}=0$ on each $e\subset\pa\O$ gives $\nabla_{w,\pmb{\tau}}e_h=0$ on each $\F\subset\pa\O$. This, together with $e_n=0$ on $\pa\O$  and \eqref{decomposition}, gives $\be_g=0$ on $\pa\O$. Next, we test the dual problem \eqref{dual-equation} against $e_0$ and use the integration by parts to obtain
\begin{equation}\label{THM:L2-estimate-1}
\begin{split}
\|e_0\|^2
=&(\Delta^2\Phi,e_0)\\
=&\sum_{T\in{\cal T}_h}\sum_{i,j=1}^d(\pa^2_{ij}\Phi,\pa^2_{ij}e_0)_T-\langle\pa^2_{ij}\Phi,\pa_ie_0 n_j\rangle_{\pa T}
  +\langle\pa_j(\pa^2_{ij}\Phi) n_i,e_0\rangle_{\pa T}\\
=&\sum_{T\in{\cal T}_h}\sum_{i,j=1}^d(\pa^2_{ij}\Phi,\pa^2_{ij}e_0)_T-\langle\pa^2_{ij}\Phi,(\pa_ie_0-e_{gi}) n_j\rangle_{\pa T}\\
&+\langle\pa_j(\pa^2_{ij}\Phi) n_i,e_0-e_{b,f}\rangle_{\pa T},
\end{split}
\end{equation}
 where we have used $\sum_{T\in{\cal T}_h} \langle\pa^2_{ij}\Phi, e_{gi} n_j\rangle_{\pa T}=0$ and $\sum_{T\in{\cal T}_h}\langle\pa_j(\pa^2_{ij}\Phi) n_i, e_{b,f}\rangle_{\pa T}=0$ since {$e_{b,f}=0$ and} $\be_{g}=0$ on $\pa\O$.

Analogues to \eqref{error equation-3}, we have
\begin{equation*}
\begin{split}
(\pa_{ij}^2\Phi,\pa_{ij}^2e_0)_T
=&(\pa_{ij,w}^2e_h,\pa_{ij,w}^2Q_h\Phi)_T+\langle (e_{b,f}-e_0)n_i,\pa_j(\mathbb{Q}_h(\pa^2_{ij}\Phi))\rangle_{\pa T}\\
 &+\langle\pa_ie_0-e_{gi},\mathbb{Q}_h(\pa^2_{ij}\Phi) n_j\rangle_{\pa T},
\end{split}
\end{equation*}
which, together with \eqref{THM:L2-estimate-1} and \eqref{Error-equation}-\eqref{error equation-remainder}, leads to
\begin{equation}\label{THM:L2-estimate-3}
\begin{split}
 \|e_0\|^2
=&\sum_{T\in{\cal T}_h}\sum_{i,j=1}^d(\pa_{ij,w}^2e_h,\pa_{ij,w}^2Q_h\Phi)_T
+\langle(\pa_ie_0-e_{gi}) n_j,\mathbb{Q}_h(\pa^2_{ij}\Phi)-\pa^2_{ij}\Phi\rangle_{\pa T}\\
&+\langle(e_0-e_{b,f}) n_i,\pa_j(\pa^2_{ij}\Phi-\mathbb{Q}_h(\pa^2_{ij}\Phi))\rangle_{\pa T}\\
%=&\zeta_u(Q_h\Phi)-s(e_h,Q_h\Phi)+s(e_h,Q_h\Phi)-\zeta_\Phi(e_h)\\
=&\zeta_u(Q_h\Phi)-\zeta_\Phi(e_h)\\
=& \sum_{i=1}^3 J_i-\zeta_\Phi(e_h),
\end{split}
\end{equation}
where $J_i$ are given as in \eqref{error equation-remainder} with $v=Q_h\Phi.$

The rest of the proof amounts to the estimate for each of the four terms on the last line in  \eqref{THM:L2-estimate-3}.

For $J_1$, we have from Cauchy-Schwarz inequality, \eqref{esti2}, the trace inequality \eqref{trace inequality}, \eqref{q0}, and the $H^4$ regularity assumption \eqref{dual-regular} that
\begin{equation}\label{THM:L2-estimate-5}
\begin{split}
 &|J_1|\\
=&|\sum_{T\in{\cal T}_h}h_T^{-2}\langle Q_b(Q_0u)-Q_bu,Q_b(Q_0\Phi)-Q_b\Phi\rangle_{\pa \F}\\
 &+h_T^{-3}\langle Q_f(Q_0u)-Q_fu,Q_f(Q_0\Phi)-Q_f\Phi\rangle_{\pa T}\\
 &+h_T^{-1}\langle Q_n(\nabla Q_0u)\cdot\bn_f-Q_n(\nabla u\cdot\bn_f),Q_n(\nabla Q_0\Phi)\cdot\bn_f-Q_n(\nabla\Phi\cdot\bn_f)\rangle_{\pa T}\\
 &+\delta_{k,3}\sum_{T\in{\cal T}_h}h_T^{-1}\langle Q_nD_{\pmb{\tau}}Q_0u-\nabla_{w,\pmb{\tau}}Q_hu,Q_nD_{\pmb{\tau}}Q_0\Phi-\nabla_{w,\pmb{\tau}}Q_h\Phi\rangle_{\pa T}|\\
\lesssim&\Big(\sum_{T\in{\cal T}_h}h_T^{-2}\|Q_0u-u\|_{\pa \F}^2\Big)^{\frac{1}{2}}
  \Big(\sum_{T\in{\cal T}_h}h_T^{-2}\|Q_0\Phi-\Phi\|_{\pa \F}^2\Big)^{\frac{1}{2}}\\
&+\Big(\sum_{T\in{\cal T}_h}h_T^{-3}\|Q_0u-u\|_{\pa T}^2\Big)^{\frac{1}{2}}
  \Big(\sum_{T\in{\cal T}_h}h_T^{-3}\|Q_0\Phi-\Phi\|_{\pa T}^2\Big)^{\frac{1}{2}}\\
&+\Big(\sum_{T\in{\cal T}_h}h_T^{-1}\|\nabla Q_0u-\nabla u\|_{\pa T}^2\Big)^{\frac{1}{2}}
  \Big(\sum_{T\in{\cal T}_h}h_T^{-1}\|\nabla Q_0\Phi-\nabla\Phi\|_{\pa T}^2\Big)^{\frac{1}{2}}\\
  &+\delta_{k,3}h^4\|u\|_4 \|\Phi\|_4\\
\lesssim&\ h^{k+1}\|u\|_{k+1}\|\Phi\|_4\\
\lesssim&\ h^{k+1}\|u\|_{k+1}\|e_0\|.
\end{split}
\end{equation}

For the term $J_2$, we have
\begin{equation*}%\label{THM:L2-estimate-6}
\begin{split}
 J_2=& \sum_{T\in{\cal T}_h}\sum_{i,j=1}^d\langle Q_0\Phi-Q_f\Phi,\pa_j(\mathbb{Q}_h(\pa^2_{ij}u)-\pa^2_{ij}u) n_i\rangle_{\pa T}\\
=&\sum_{T\in{\cal T}_h}\sum_{i,j=1}^d\langle (Q_0\Phi-\Phi)+(\Phi-Q_f\Phi),\pa_j(\mathbb{Q}_h(\pa^2_{ij}u)-\pa^2_{ij}u) n_i\rangle_{\pa T}\\
=&\sum_{T\in{\cal T}_h}\sum_{i,j=1}^d\langle Q_0\Phi-\Phi,\pa_j((\mathbb{Q}_h-I)\pa^2_{ij}u) n_i\rangle_{\pa T}
 \\&+\langle \Phi-Q_f\Phi,\pa_j(\pa^2_{ij}u)n_i\rangle_{\pa T}\\
=&\sum_{T\in{\cal T}_h}\sum_{i,j=1}^d\langle Q_0\Phi-\Phi,\pa_j(\mathbb{Q}_h\pa^2_{ij}u-\pa^2_{ij}u) n_i\rangle_{\pa T},
\end{split}
\end{equation*}
where we used the fact that $\sum_{T\in\T_h}\langle \Phi-Q_f\Phi,\pa_j(\pa^2_{ij}u) n_i\rangle_{\pa T}=0$.
It follows that
\begin{equation}\label{THM:L2-estimate-6}
\begin{split}
 |J_2| \lesssim&\Big(\sum_{T\in{\cal T}_h}\|Q_0\Phi-\Phi\|_{\pa T}^2\Big)^{\frac{1}{2}}
           \Big(\sum_{T\in{\cal T}_h}\sum_{i,j=1}^d\|\pa_j(\mathbb{Q}_h(\pa^2_{ij}u)-\pa^2_{ij}u)\|_{\pa T}^2\Big)^{\frac{1}{2}}\\
\lesssim&\ h^{k+1}\|\Phi\|_4\ \|u\|_{k+1}\\
\lesssim&\ h^{k+1}\|u\|_{k+1}{\|e_0\|.}
\end{split}
\end{equation}

For the term $J_3$, we note that the weak gradient of the $L^2$ projection of a smooth function is the same as the $L^2$ projection of its classical gradient on the boundary of each element,  see \eqref{decomposition-new}. Hence,
$$
J_3=\sum_{T\in{\cal T}_h}\sum_{i,j=1}^d\langle\pa_iQ_0\Phi-Q_n(\pa_i\Phi),
 (\pa^2_{ij}u-\mathbb{Q}_h(\pa^2_{ij}u))n_j\rangle_{\pa T}.
$$
It follows from the Cauchy-Schwarz inequality, the trace inequality \eqref{trace inequality}, Lemma \ref{error projection}, and the regularity assumption \eqref{dual-regular} that
\begin{equation}\label{THM:L2-estimate-7}
\begin{split}
 |J_3|\lesssim&\Big(\sum_{T\in{\cal T}_h}h_T^{-1}\|\nabla Q_0\Phi-\nabla\Phi\|_{T}^2+h_T|\nabla Q_0\Phi-\nabla\Phi|_{1, T}^2\Big)^{\frac{1}{2}}\\
&\cdot\Big(\sum_{T\in{\cal T}_h}\sum_{i,j=1}^dh_T^{-1}\|\pa^2_{ij}u-\mathbb{Q}_h(\pa^2_{ij}u)\|_{T}^2
 +h_T\|\nabla(\pa^2_{ij}u-\mathbb{Q}_h(\pa^2_{ij}u))\|_{T}^2\Big)^{\frac{1}{2}}\\
\lesssim&\ h^{k+1}\|\Phi\|_4\ \|u\|_{k+1}\\
\lesssim&\ h^{k+1}\|u\|_{k+1}\|e_0\|.
\end{split}
\end{equation}

To deal with the last term, using the same arguments as in \eqref{energy-estimate-3}-\eqref{energy-estimate-4} with $u=\Phi$ and then combining  \eqref{tribarerror} with \eqref{dual-regular}, there yields
\begin{equation}\label{THM:L2-estimate-4}
\begin{split}
|\zeta_\Phi(e_h)|
\lesssim&h^2\|\Phi\|_4\3bare_h\3bar\\
\lesssim&h^{k+1}\|u\|_{k+1}\|\Phi\|_4\\
\lesssim&h^{k+1}\|u\|_{k+1}\|e_0\|.
\end{split}
\end{equation}

Finally, substituting \eqref{THM:L2-estimate-5}-\eqref{THM:L2-estimate-4} into \eqref{THM:L2-estimate-3}  completes the proof of the theorem.
\end{proof}

We further introduce the following measure for the numerical solutions on element boundaries:
\begin{equation*}
\begin{split}
 \|e_{b,e}\|_{\E_h}=&\Big(\sum_{T\in{\cal T}_h}h_T^2\|e_{b,e}\|_{\pa \F}^2\Big)^{\frac{1}{2}},\\
 \|e_{b,f}\|_{{\F}_h}=&\Big(\sum_{T\in{\cal T}_h}h_T\|e_{b,f}\|_{\pa T}^2\Big)^{\frac{1}{2}},\\
 \|e_n\|_{{\F}_h}=&\Big(\sum_{T\in{\cal T}_h}h_T\|e_n\|_{\pa T}^2\Big)^{\frac{1}{2}}.
\end{split}
\end{equation*}

\begin{theorem}\label{THM:L2-estimate-edge}
Under the assumptions of Theorem \ref{THM:L2-estimate-e0}, there holds
\begin{eqnarray}
\|e_{b,e}\|_{\E_h}&\lesssim & h^{k+1}\|u\|_{k+1},\label{errebe}
\\ \label{errebf}
\|e_{b,f}\|_{{\F}_h}&\lesssim & h^{k+1}\|u\|_{k+1},
\\ \label{erreben}
\|e_n\|_{{\F}_h}&\lesssim & h^k\|u\|_{k+1}.
\end{eqnarray}
\end{theorem}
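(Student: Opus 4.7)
The plan is to split each boundary quantity via triangle inequality into a ``discrepancy'' piece controlled by the stabilizer $s(e_h,e_h)$ and a ``projection'' piece controlled by $\|e_0\|$, then invoke Theorems~\ref{THM:energy-estimate} and \ref{THM:L2-estimate-e0}. Concretely, I would write $e_{b,e}=(e_{b,e}-Q_be_0)+Q_be_0$, $e_{b,f}=(e_{b,f}-Q_fe_0)+Q_fe_0$, and $e_n=(e_n-Q_n(\nabla e_0)\cdot\bn_f)+Q_n(\nabla e_0)\cdot\bn_f$, then treat the three estimates in parallel.

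For the discrepancy pieces, each summand is, up to an $h_T$ weight, exactly a summand of $s(e_h,e_h)$. Since the target norms in \eqref{errebe}--\eqref{erreben} carry weights $h_T^2$, $h_T$, $h_T$ while the corresponding stabilizer terms use weights $h_T^{-2}$, $h_T^{-3}$, $h_T^{-1}$, the surplus factors are $h^4$, $h^4$, $h^2$ respectively. Combined with $s(e_h,e_h)\le\3bare_h\3bar^2\lesssim h^{2(k-1)}\|u\|_{k+1}^2$ from Theorem~\ref{THM:energy-estimate}, the discrepancy contributions are bounded by $h^{2(k+1)}\|u\|_{k+1}^2$ for the first two quantities and by $h^{2k}\|u\|_{k+1}^2$ for $e_n$, which already match the target rates.

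For the projection pieces, I use that $L^2$ projections are contractions, followed by trace/inverse inequalities applied to the polynomial $e_0|_T$. Iterating the inverse trace inequality \eqref{inverse inequality} gives $\|e_0\|_{\pa\F}^2\lesssim h_T^{-2}\|e_0\|_T^2$ and $\|e_0\|_{\pa T}^2\lesssim h_T^{-1}\|e_0\|_T^2$, so after summing against the appropriate $h_T$-weights the boundary mass collapses to $\|e_0\|^2$, which is $O(h^{2(k+1)}\|u\|_{k+1}^2)$ by Theorem~\ref{THM:L2-estimate-e0}. For the $e_n$ projection piece I additionally invoke the inverse inequality $\|\nabla e_0\|_T\lesssim h_T^{-1}\|e_0\|_T$, which costs an extra $h^{-2}$ factor and produces $h^{2k}\|u\|_{k+1}^2$, matching the claimed $h^k$ rate.

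The only point demanding attention is the one-order loss for $e_n$, which is structural because $e_n$ is a derivative quantity rather than a function value; the inverse inequality absorbs exactly this order, and nothing stronger can be expected with the tools at hand. Beyond this, the proof is essentially bookkeeping that chains together the previously established Theorems~\ref{THM:energy-estimate} and \ref{THM:L2-estimate-e0} with the trace and inverse inequalities \eqref{trace inequality}--\eqref{inverse inequality}, so I do not anticipate any genuine obstacle.
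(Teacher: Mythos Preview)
Your proposal is correct and follows essentially the same approach as the paper: split each boundary quantity via the triangle inequality into a stabilizer-controlled piece and a projection-of-$e_0$ piece, then apply the trace/inverse inequalities together with Theorems~\ref{THM:energy-estimate} and~\ref{THM:L2-estimate-e0}. The paper writes out only the case \eqref{errebe} explicitly and declares the other two analogous, while you have sketched all three; your handling of the extra $h^{-1}$ loss for $e_n$ via the inverse inequality on $\nabla e_0$ is exactly what is needed there.
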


\begin{proof}
From the triangular inequality, the trace inequality \eqref{inverse inequality}, \eqref{tri-semibar}, Theorems \ref{THM:energy-estimate} and \ref{THM:L2-estimate-e0}, there holds
\begin{equation*}\label{L2-estimate-edge-2}
\begin{split}
\|e_{b,e}\|_{\E_h}
=&\Big(\sum_{T\in{\cal T}_h}h_T^2\|e_{b,e}\|_{\pa \F}^2\Big)^{\frac{1}{2}}\\
\lesssim&\Big(\sum_{T\in{\cal T}_h}h_T^2\|Q_be_0\|_{\pa \F}^2+h_T^2\|e_{b,e}-Q_be_0\|_{\pa \F}^2\Big)^{\frac{1}{2}}\\
\lesssim&\Big(\sum_{T\in{\cal T}_h}h_T^2h_T^{-1}\|e_0\|_{\pa T}^2+h_T^2h_T^2\3bare_h\3bar^2\Big)^{\frac{1}{2}}\\
\lesssim&\Big(\sum_{T\in{\cal T}_h}h_Th_T^{-1}\|e_0\|_T^2+h_T^4h_T^{2(k-1)}\|u\|_{k+1}^2\Big)^{\frac{1}{2}}\\
\lesssim&h^{k+1}\|u\|_{k+1},
\end{split}
\end{equation*}
which completes the proof for \eqref{errebe}.

The proof for \eqref{errebf} and \eqref{erreben} can be obtained by using a similar argument.

\end{proof}

\section{Numerical experiments}\label{Section:NE}

In this section, the numerical scheme \eqref{WG-scheme} will be implemented to verify the convergence theory established in the previous sections. To this end, we first solve the biharmonic equation \eqref{model-problem} on the unit square $\Omega=(0,1)^2$, where $g$ and the boundary conditions are chosen so that the exact
 solution is
\begin{equation}
    u(x,y)=2^8(x-x^2)^2(y-y^2)^2. \label{sol}
\end{equation}

\textbf{Test Example 1.}  We take the square as the initial mesh, and subdivide each square into four to get subsequent meshes, as shown in Figure \ref{grid1}. One can see from Table \ref{t1} that the optimal rates of convergence are obtained in the usual $L^2$ and $H^2$-like {triple-bar norm} for $P_3$,  $P_4$ and $P_5$ WG methods.

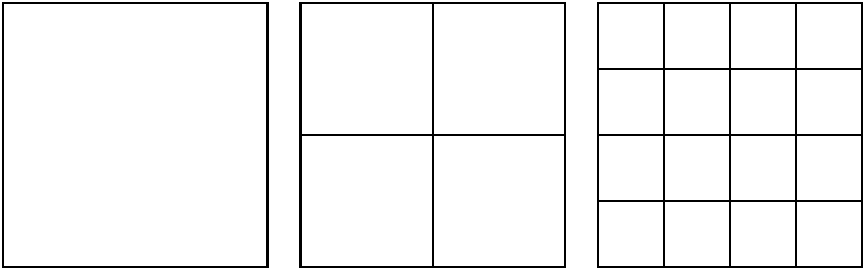
\begin{figure}[ht!]
 \begin{center} \setlength\unitlength{1.25pt}
\begin{picture}(260,80)(0,0)
  \def\tr{\begin{picture}(20,20)(0,0)\put(0,0){\line(1,0){20}}\put(0,20){\line(1,0){20}}
          \put(0,0){\line(0,1){20}} \put(20,0){\line(0,1){20}}
        %\put(0,20){\line(1,-1){20}}
      \end{picture}}
 {\setlength\unitlength{5pt}
 \multiput(0,0)(20,0){1}{\multiput(0,0)(0,20){1}{\tr}}}

  {\setlength\unitlength{2.5pt}
 \multiput(45,0)(20,0){2}{\multiput(0,0)(0,20){2}{\tr}}}

  \multiput(180,0)(20,0){4}{\multiput(0,0)(0,20){4}{\tr}}

 \end{picture}\end{center}
\caption{The first three levels of square grids used in Table \ref{t1} computation.}
\label{grid1}
\end{figure}
\begin{table}[ht!]
    \caption{The error profile
  for solving \eqref{sol} on square grids shown in Figure \ref{grid1}.}
    \label{t1}
    \begin{center}
    \begin{tabular}{|c|cc|cc|}  %\multispan{3}
        \hline
Grid& $\|Q_hu-u_h\| $ & Rate &$\3bar Q_hu-u_h \3bar$&Rate   \\
        \hline
   &\multicolumn{4}{c|} { The $P_3$ WG finite element }  \\
        \hline
 5&   0.1486E-02 & 3.90&   0.9339E+00 & 1.95 \\
 6&   0.9595E-04 & 3.95&   0.2373E+00 & 1.98 \\
 7&   0.6092E-05 & 3.98&   0.5981E-01 & 1.99 \\
\hline
   &\multicolumn{4}{c|} { The $P_4$ WG finite element }  \\
        \hline
 3&   0.3791E-01 & 3.85&   0.3692E+01 & 2.86 \\
 4&   0.1330E-02 & 4.83&   0.4803E+00 & 2.94 \\
 5&   0.4232E-04 & 4.97&   0.6068E-01 & 2.98 \\
\hline
   &\multicolumn{4}{c|} { The $P_5$ WG finite element }  \\
        \hline
 2&   0.2460E+00 & 5.05&   0.1823E+02 & 5.21 \\
 3&   0.5110E-02 & 5.59&   0.9983E+00 & 4.19 \\
 4&   0.8558E-04 & 5.90&   0.5589E-01 & 4.16 \\
\hline
\end{tabular}
\end{center}
\end{table}

\textbf{Test Example 2.}  We take the uniform triangular meshes, as shown in Figure \ref{grid2}. One can see from Table \ref{t2} that optimal rates of convergence are demonstrated in the usual $L^2$ and $H^2$-like
{triple-bar norm} for $P_3$, $P_4$ and $P_5$ WG methods.

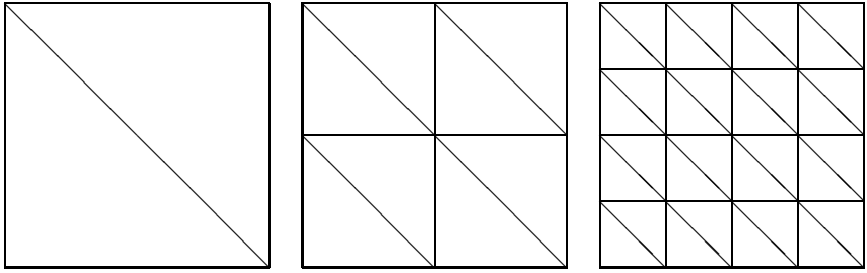
\begin{figure}[ht!]
 \begin{center} \setlength\unitlength{1.25pt}
\begin{picture}(260,80)(0,0)
  \def\tr{\begin{picture}(20,20)(0,0)\put(0,0){\line(1,0){20}}\put(0,20){\line(1,0){20}}
          \put(0,0){\line(0,1){20}} \put(20,0){\line(0,1){20}}
        \put(0,20){\line(1,-1){20}}
      \end{picture}}
 {\setlength\unitlength{5pt}
 \multiput(0,0)(20,0){1}{\multiput(0,0)(0,20){1}{\tr}}}

  {\setlength\unitlength{2.5pt}
 \multiput(45,0)(20,0){2}{\multiput(0,0)(0,20){2}{\tr}}}

  \multiput(180,0)(20,0){4}{\multiput(0,0)(0,20){4}{\tr}}

 \end{picture}\end{center}
\caption{The first three levels of triangular grids used in Table \ref{t2} computation.}
\label{grid2}
\end{figure}
\begin{table}[ht!]
    \caption{The error profile for solving \eqref{sol} on triangular grids shown in Figure \ref{grid2}.}
    \label{t2}
    \begin{center}
    \begin{tabular}{|c|cc|cc|}  %\multispan{3}
        \hline
Grid& $\|Q_hu-u_h\| $ & Rate &$\3bar Q_hu-u_h \3bar$&Rate   \\
        \hline
   &\multicolumn{4}{c|} { The $P_3$ WG finite element }  \\
        \hline
 5&   0.8263E-03 & 3.97&   0.7030E+00 & 1.98 \\
 6&   0.5190E-04 & 3.99&   0.1764E+00 & 2.00 \\
 7&   0.3252E-05 & 4.00&   0.4414E-01 & 2.00 \\
\hline
   &\multicolumn{4}{c|} { The $P_4$ WG finite element }  \\
        \hline
 4&   0.6526E-03 & 4.86&   0.2874E+00 & 2.88 \\
 5&   0.2088E-04 & 4.97&   0.3666E-01 & 2.97 \\
 6&   0.6563E-06 & 4.99&   0.4606E-02 & 2.99 \\
\hline
   &\multicolumn{4}{c|} { The $P_5$ WG finite element }  \\
        \hline
 3&   0.2622E-02 & 5.59&   0.3941E+00 & 3.65 \\
 4&   0.4362E-04 & 5.91&   0.2601E-01 & 3.92 \\
 5&   0.6929E-06 & 5.98&   0.1649E-02 & 3.98 \\
\hline
\end{tabular}
\end{center}
\end{table}

\textbf{Test Example 3. } We take polygonal meshes shown as in Figure \ref{grid3}. Table \ref{t3} illustrates the corresponding numerical results which clearly demonstrate optimal rates of convergence in the usual $L^2$ and $H^2$-like triple-bar norms for $P_3$,  $P_4$ and $P_5$ WG methods.

\begin{figure}[ht!]
 \begin{center} \setlength\unitlength{1.25pt}
\begin{picture}(180,80)(0,0)
  \def\tr{\begin{picture}(20,20)(0,0)\put(0,0){\line(1,0){20}}\put(0,20){\line(1,0){20}}
          \put(0,0){\line(0,1){20}} \put(20,0){\line(0,1){20}}
        \put(20,0){\line(-3,2){6}}\put(0,0){\line(3,2){6}}\put(6,4){\line(1,0){8}}
        \put(20,20){\line(-3,-2){6}}\put(0,20){\line(3,-2){6}}\put(6,16){\line(1,0){8}}
  \put(4,10){\line(1,3){2}}\put(4,10){\line(1,-3){2}}
  \put(16,10){\line(-1,3){2}}\put(16,10){\line(-1,-3){2}}
      \end{picture}}
 {\setlength\unitlength{5pt}
 \multiput(0,0)(20,0){1}{\multiput(0,0)(0,20){1}{\tr}}}

  {\setlength\unitlength{2.5pt}
 \multiput(45,0)(20,0){2}{\multiput(0,0)(0,20){2}{\tr}}}

 \end{picture}\end{center}
\caption{The first two levels of quadrilateral-pentagon-hexagon grids used in Table \ref{t3} computation.}
\label{grid3}
\end{figure}
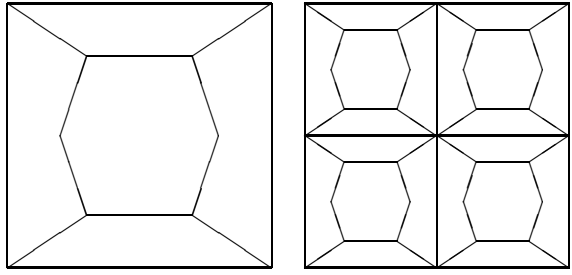

\begin{table}[ht!]
    \caption{The error profile for solving \eqref{sol} on polygonal grids shown in Figure \ref{grid3}.}
    \label{t3}
    \begin{center}
    \begin{tabular}{|c|cc|cc|}  %\multispan{3}
        \hline
Grid& $\|Q_hu-u_h\| $ & Rate &$\3bar Q_hu-u_h \3bar$&Rate   \\
        \hline
   &\multicolumn{4}{c|} { The $P_3$ WG finite element }  \\
        \hline
 4&   0.5052E-02 & 3.92&   0.1724E+01 & 1.94 \\
 5&   0.3207E-03 & 3.98&   0.4355E+00 & 1.98 \\
 6&   0.1999E-04 & 4.00&   0.1092E+00 & 2.00 \\
\hline
   &\multicolumn{4}{c|} { The $P_4$ WG finite element }  \\
        \hline
 3&   0.4732E-02 & 4.96&   0.9861E+00 & 3.04 \\
 4&   0.1473E-03 & 5.01&   0.1213E+00 & 3.02 \\
 5&   0.4974E-05 & 4.89&   0.1510E-01 & 3.01 \\
\hline
   &\multicolumn{4}{c|} { The $P_5$ WG finite element }  \\
        \hline
 1&   0.1201E+01 & 0.00&   0.3857E+02 & 0.00 \\
 2&   0.1468E-01 & 6.36&   0.1683E+01 & 4.52 \\
 3&   0.2705E-03 & 5.76&   0.9122E-01 & 4.21 \\
\hline
\end{tabular}
\end{center}
\end{table}

\textbf{Test Example 4.} We solve the biharmonic equation \eqref{model-problem} on the unit cubic domain $\Omega=(0,1)^3$, where $g$ and the boundary conditions are chosen so that the exact solution is given by
\begin{equation}
    u(x,y,z)=2^{12}(x-x^2)^2(y-y^2)^2(z-z^2)^2. \label{sol2}
\end{equation}
In this test, we use the uniform cube meshes shown as in Figure \ref{grid4}.
The results from the $P_3$ and $P_4$ WG methods are shown in Table \ref{t4}.
The optimal order of convergence is achieved in all cases.

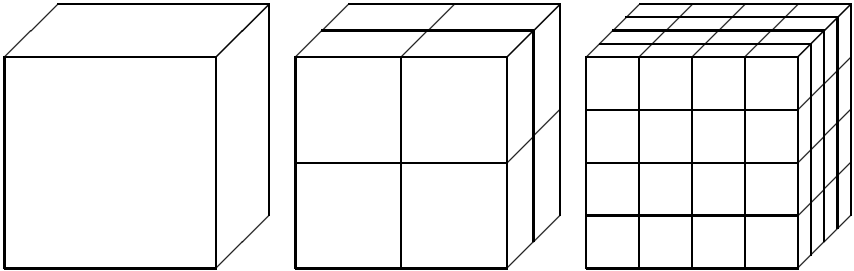
\begin{figure}[ht!]
\begin{center}
 \setlength\unitlength{1pt}
    \begin{picture}(320,118)(0,3)
    \put(0,0){\begin{picture}(110,110)(0,0)
       \multiput(0,0)(80,0){2}{\line(0,1){80}}  \multiput(0,0)(0,80){2}{\line(1,0){80}}
       \multiput(0,80)(80,0){2}{\line(1,1){20}} \multiput(0,80)(20,20){2}{\line(1,0){80}}
       \multiput(80,0)(0,80){2}{\line(1,1){20}}  \multiput(80,0)(20,20){2}{\line(0,1){80}}
  %\put(80,0){\line(-1,1){80}}\put(80,0){\line(1,5){20}}\put(80,80){\line(-3,1){60}}
      \end{picture}}
    \put(110,0){\begin{picture}(110,110)(0,0)
       \multiput(0,0)(40,0){3}{\line(0,1){80}}  \multiput(0,0)(0,40){3}{\line(1,0){80}}
       \multiput(0,80)(40,0){3}{\line(1,1){20}} \multiput(0,80)(10,10){3}{\line(1,0){80}}
       \multiput(80,0)(0,40){3}{\line(1,1){20}}  \multiput(80,0)(10,10){3}{\line(0,1){80}}
  %  \put(80,0){\line(-1,1){80}}\put(80,0){\line(1,5){20}}\put(80,80){\line(-3,1){60}}
   %    \multiput(40,0)(40,40){2}{\line(-1,1){40}}
   %     \multiput(80,40)(10,-30){2}{\line(1,5){10}}
   %     \multiput(40,80)(50,10){2}{\line(-3,1){30}}
      \end{picture}}
    \put(220,0){\begin{picture}(110,110)(0,0)
       \multiput(0,0)(20,0){5}{\line(0,1){80}}  \multiput(0,0)(0,20){5}{\line(1,0){80}}
       \multiput(0,80)(20,0){5}{\line(1,1){20}} \multiput(0,80)(5,5){5}{\line(1,0){80}}
       \multiput(80,0)(0,20){5}{\line(1,1){20}}  \multiput(80,0)(5,5){5}{\line(0,1){80}}
   % \put(80,0){\line(-1,1){80}}\put(80,0){\line(1,5){20}}\put(80,80){\line(-3,1){60}}
    %   \multiput(40,0)(40,40){2}{\line(-1,1){40}}
    %    \multiput(80,40)(10,-30){2}{\line(1,5){10}}
    %    \multiput(40,80)(50,10){2}{\line(-3,1){30}}

    %   \multiput(20,0)(60,60){2}{\line(-1,1){20}}   \multiput(60,0)(20,20){2}{\line(-1,1){60}}
    %    \multiput(80,60)(15,-45){2}{\line(1,5){5}} \multiput(80,20)(5,-15){2}{\line(1,5){15}}
     %   \multiput(20,80)(75,15){2}{\line(-3,1){15}}\multiput(60,80)(25,5){2}{\line(-3,1){45}}
      \end{picture}}

    \end{picture}
    \end{center}
\caption{  The first three levels of cube grids used in the computation of Table \ref{t4}. }
\label{grid4}
\end{figure}

\begin{table}[ht!]
    \caption{The error profile for solving \eqref{sol2} on cube grids shown in Figure \ref{grid4}.}
    \label{t4}
    \begin{center}
    \begin{tabular}{|c|cc|cc|}  %\multispan{3}
        \hline
Grid& $\|Q_hu-u_h\|$ & Rate &$\3bar Q_hu-u_h \3bar$&Rate   \\
        \hline
   &\multicolumn{4}{c|} { The $P_3$ WG finite element }  \\
        \hline
 2&  0.8474E-01&  6.2&  0.1633E+01&  4.5\\
 3&  0.2583E-02&  5.0&  0.1846E+00&  3.1\\
 4&  0.2063E-03&  3.6&  0.4861E-01&  1.9\\
\hline
   &\multicolumn{4}{c|} { The $P_4$ WG finite element }  \\
        \hline
 2&  0.2247E-01&  9.8&  0.1373E+01&  6.9 \\
 3&  0.4988E-03&  5.5&  0.1079E+00&  3.7 \\
 4&  0.1705E-04&  4.9&  0.1009E-01&  3.4 \\
\hline
\end{tabular}
\end{center}
\end{table}

\newpage


\begin{thebibliography}{99}

\bibitem{AMV2018} {\sc P. F. Antonietti, G. Manzini and M. Verani}, {\em
The fully nonconforming virtual element method for biharmonic problems}, Math. Model. Methods. Appl. Sci., vol. 28 (2), pp. 387-407, 2018.

\bibitem{BGZ2020} {\sc J. Burkardt, M. Gunzburger and W. Zhao}, {\em
High-precision computation of the weak Galerkin methods for the fourth-order problem}, Numer. Algor., vol. 84, pp. 181-205, 2020.

\bibitem{CDG2009} {\sc B. Cockburn, B. Dong and J. Guzm\'{a}n}, {\em
A hybridizable and superconvergent discontinuous Galerkin method for biharmonic problems}, J. Sci. Comput., vol. 40, pp. 141-187, 2009.

\bibitem{CH2020} {\sc L. Chen and X. Huang}, {\em
Nonconforming virtual element method for $2m$th order partial differential equations in $\mathbb{R}^n$}, Math. Comput., vol. 89 (324), pp. 1711-1744, 2020.

\bibitem{DR2022} {\sc Z. Dong and A. Ren}, {\em
Hybrid high-order and weak Galerkin methods for the biharmonic problem}, SIAM J. Numer. Anal., {vol. 60 (5),} pp. 2626-2656, 2022.

\bibitem{CHL2001} {\sc J. Huang, L. Li and J. Chen}, {\em
On mortar-type Morley element method for plate bending problem}, Appl. Numer. Math., vol. 37, pp. 519-533, 2001.

\bibitem{HH2011} {\sc J. Huang and X. Huang}, {\em
Local and parallel algorithms for fourth order problems discretized by the Morley-Wang-Xu element method}, Numer. Math., vol. 119, pp. 667-697, 2011.

\bibitem{HI2023} {\sc H. Ishizaka}, {\em
Morley finite element analysis for fourth-order elliptic equations under a semi-regular mesh condition}, https://arxiv.org/pdf/2302.08719.pdf.

 \bibitem{WWL-2022} {\sc D. Li, C. Wang and J. Wang}, {\em
Weak Galerkin methods based Morley elements on general polytopal partitions}, https://arxiv.org/pdf/2210.17518v1.pdf.

 \bibitem{WWL-2022-bihar} {\sc D. Li, C. Wang and J. Wang}, {\em
Generalized weak Galerkin finite element methods for biharmonic equations}, {J. Comput. Appl. Math.,} vol. 434, {pp. 115353,} 2023.

\bibitem{Eff-MWY2017} {\sc L. Mu, J. Wang and X. Ye}, {\em
Effective implementation of the weak Galerkin finite element methods for the biharmonic equation}, Comput. Math. Appl., vol. 74, pp. 1215-1222, 2017.

\bibitem{MWY2014} {\sc L. Mu, J. Wang and X. Ye}, {\em
Weak Galerkin finite element methods for the biharmonic equation on polytopal meshes}, Numer. Methods Partial Differ. Equ., vol. 30 (3), pp.  1003-1029, 2014.

 \bibitem{WYWM2013} {\sc L. Mu, J. Wang, Y. Wang and X. Ye}, {\em
A weak Galerkin mixed finite element method for biharmonic equations}, Numerical Solution of Partial Differential Equations: Theory, Algorithms, and Their Applications, Springer Proceedings in Mathematical Statistics., vol. 45, pp. 247-277, 2013.

\bibitem{MSB2007} {\sc I. Mozolevski, E. S\"{u}li and P. R. B\"{o}sing}, {\em
Hp-version a priori error analysis of interior penalty discontinuous Galerkin finite element approximations to the biharmonic equation}, J. Sci. Comput., vol. 30 (3), pp. 465-491, 2007.

\bibitem{MC2006} {\sc S. Mao and S. Chen}, {\em
Convergence analysis of Morley element on anisotropic meshes}, J. Comput. Math., vol. 24, pp. 169-180, 2006.

\bibitem{PS2013} {\sc C. Park and D. Sheen}, {\em
A quadrilateral Morley element for biharmonic equations}, Numer. Math., vol. 124, pp. 395-413, 2013.

\bibitem{rusa1988} {\sc V. Ruas}, {\em
A quadratic finite element method for solving biharmonic problems in $\mathbb{R}^n$}, Numer. Math., vol. 52, pp. 33-43, 1988.

\bibitem{RS2002} {\sc R. Stevenson}, {\em
An analysis of nonconforming multi-grid methods, leading to an improved method for the Morley element}, Math. Comp., vol. 72, pp. 55-81, 2002.

\bibitem{SX1998} {\sc Z. Shi and Z. Xie}, {\em
Multigrid methods for Morley element on nonnested meshes}, J. Comput. Math., vol. 16 (5), pp. 385-394, 1998.

\bibitem{wy} {\sc J. Wang and X. Ye}, {\em
A weak Galerkin finite element method for second-order elliptic problems}, J. Comput. Appl. Math., vol. 241, pp.103-115, 2013.

\bibitem{WW_bihar-2014} {\sc C. Wang and J. Wang}, {\em
An efficient numerical scheme for the biharmonic equation by weak Galerkin finite element methods on polygonal or polyhedral meshes}, Comput. Math. Appl., vol. 68, pp. 2314-2330, 2014.

\bibitem{WW_HWG-2015} {\sc C. Wang and J. Wang}, {\em
A hybridized weak Galerkin finite element method for the biharmonic equation}, Int. J. Numer. Anal. Model., vol. 12, pp. 302-317, 2015.

\bibitem{WY-ellip_MC2014} {\sc J. Wang and X. Ye}, {\em
A weak Galerkin mixed finite element method for second-order elliptic problems}, Math. Comp., vol. 83, pp. 2101-2126, 2014.

\bibitem{WWYZ-JCAM2021} {\sc C. Wang, J. Wang, X. Ye and S. Zhang}, {\em
De Rham complexes for weak Galerkin finite element spaces}, J. Comput. Appl. Math., vol. 397, pp. 113645, 2021.

\bibitem{wwfp} {C. Wang and J. Wang}, {\em
A primal-dual weak Galerkin finite element method for Fokker-Planck type equations}, SIAM. J. Numer. Anal., vol. 58 (5), pp. 2632-2661, 2020.

\bibitem{WX2006} {\sc M. Wang and J. Xu}, {\em
The Morley element for fourth order elliptic equations in any dimensions}, Numer. Math., vol. 103, pp. 155-169, 2006.

\bibitem{WX2012} {\sc M. Wang and J. Xu}, {\em
Minimal finite element spaces for $2m$-th-order partial differential equations in $R^n$}, Math. Comp., vol. 82, pp. 25-43, 2012.

\bibitem{WX2007-2} {\sc M. Wang, Z. Shi and J. Xu}, {\em
Some n-rectangle nonforming elements for fourth order elliptic equations}, J. Comput. Math., vol. 25 (4), pp. 408-420, 2007.

\bibitem{WXH2006} {\sc M. Wang, J. Xu and Y. Hu}, {\em
Modified Morley element method for a fourth order elliptic singular perturbation problem}, J. Comput. Math., vol. 24 (2), pp. 113-120, 2006.

\bibitem{YZ2022} {\sc X. Ye and S. Zhang}, {\em
A weak divergence CDG method for the biharmonic equation on triangular and tetrahedral meshes}, Appl. Numer. Math., vol. 178, pp. 155-165, 2022.

\bibitem{YZ2020} {\sc X. Ye and S. Zhang}, {\em
A stabilizer free weak Galerkin method for the biharmonic equation on polytopal meshes}, SIAM J. Numer. Anal., vol. 58 (5), pp. 2572-2588, 2020.

\bibitem{ZXW2023} {\sc P. Zheng, S. Xie and X. Wang}, {\em
A stabilizer-free $C^0$ weak Galerkin method for the biharmonic equations}, Sci. China Math., vol. 66, pp. 627-646, 2023.


\end{thebibliography}
\end{document}